\title[Optimal embeddings into Lorentz spaces]{Optimal embeddings into Lorentz spaces for some vector differential operators via Gagliardo's lemma}
\author{Daniel Spector}
\address{
National Chiao Tung University \\
Department of Applied Mathematics\\
Hsinchu, Taiwan}
\address{
National Center for Theoretical Sciences\\ 
National Taiwan University\\
No. 1 Sec. 4 Roosevelt Rd.\\
Taipei, 106, Taiwan}
\address{
Washington University in St. Louis\\ 
Department of Mathematics\\
One Brookings Drive\\
St. Louis, MO 63130-4899}
\email{dspector@math.nctu.edu.tw}
\author{Jean Van Schaftingen}
\address{Universit\'e catholique de Louvain\\ 
Institut de Recherche en Math\'ematique et Physique\\
Chemin du Cyclotron 2 bte L7.01.01\\
1348 Louvain-la-Neuve\\
Belgium}
\email{Jean.VanSchaftingen@uclouvain.be}
\newcommand{\defeq}{\triangleq}
\newcommand{\compose}{\,\circ\,}
\newtheorem{theorem}{Theorem}
\newtheorem{proposition}{Proposition}[section]
\newtheorem{lemma}[proposition]{Lemma}
\newtheorem{question}[proposition]{Question}
\theoremstyle{definition}
\newtheorem{definition}[proposition]{Definition}
\theoremstyle{remark}
\numberwithin{equation}{section}
\newcommand{\abs}[1]{{\lvert #1 \rvert}}
\newcommand{\bigabs}[1]{{\bigl\lvert #1 \bigr\rvert}}
\newcommand{\biggabs}[1]{{\biggl\lvert #1 \biggr\rvert}}
\newcommand{\norm}[2][]{{\lVert #2 \rVert}_{#1}}
\newcommand{\dualprod}[2]{\langle #1, #2 \rangle}
\newcommand{\st}{\;:\;}
\newcommand{\Rset}{\mathbb{R}}
\newcommand{\Nset}{\mathbb{N}}
\newcommand{\dif}{\,\mathrm{d}}
\DeclareMathOperator{\linspan}{span}
\begin{document}

\begin{abstract}
We prove a family of Sobolev inequalities of the form 
$$
  \Vert u \Vert_{L^{\frac{n}{n-1}, 1} (\mathbb{R}^n,V)} 
  \le 
  C
  \Vert A (D) u \Vert_{L^1 (\mathbb{R}^n,E)} 
$$
where $A (D) : C^\infty_c (\mathbb{R}^n, V) \to C^\infty_c (\mathbb{R}^n, E)$ is a vector first-order homogeneous linear differential operator with constant coefficients, $u$ is a vector field on $\mathbb{R}^n$ and $L^{\frac{n}{n - 1}, 1} (\mathbb{R}^{n})$ is a Lorentz space. These new inequalities imply in particular the extension of the classical Gagliardo--Nirenberg inequality to Lorentz spaces originally due to Alvino and a sharpening of an inequality in terms of the deformation operator by Strauss (Korn--Sobolev inequality) on the Lorentz scale. The proof relies on a nonorthogonal application of the Loomis--Whitney inequality and Gagliardo's lemma.
\end{abstract}

\maketitle
\section{Introduction and Main Results}
A now classical result of Gagliardo \cite{Gagliardo} and Nirenberg \cite{Nirenberg} asserts the existence of a constant $C>0$ such that the inequality
\begin{align}\label{GN}
\|u \|_{L^{n/(n-1)}(\Rset^n)} \leq C \|D u \|_{L^1(\Rset^n,\Rset^n)}
\end{align}
holds for all $u \in W^{1,1}(\Rset^n)$.  While optimal on the scale of Lebesgue spaces, one can improve the target to a better Lorentz space.  
Indeed, Alvino \cite{Alvino} proved that there exists a constant \(C'\) such that the inequality
\begin{align}%
\label{Alvino}
\|u \|_{L^{n/(n-1),1}(\Rset^n)} \leq C' \|D u \|_{L^1(\Rset^n,\Rset^n)}
\end{align}
holds for all functions $u \in W^{1,1}(\Rset^n)$ (see \cref{preliminaries} for a precise definition of the Lorentz space $L^{p,q}(\Rset^n)$), with an explicit optimal value of the constant \(C'\).
The estimate \eqref{Alvino} reaches a limiting case of the class of Sobolev embeddings into Lorentz spaces treated by O'Neil and Peetre \citelist{\cite{ONeil_1963}*{\S 3}\cite{Peetre_1966}*{Théorème 7.1}} for $u \in W^{1,p}(\Rset^n)$, $p>1$; the corresponding improved Lorentz estimates when \(p = n\) lead to exponential integrability estimates \citelist{\cite{Brezis_Wainger_1980}*{Theorems 2 and 3}\cite{Brezis_1979}*{Theorem 7}}.
The inequality \eqref{Alvino} was rediscovered by Poornima \cite{Poornima} and Tartar \cite{Tartar_1998}*{Theorem 8}, and was also proved by Fournier \cite{Fournier}.
As $L^{n/(n-1),1}(\Rset^n) \subsetneq L^{n/(n-1), n/(n-1)}(\Rset^n)=L^{n/(n-1)}(\Rset^n)$, the inequality \eqref{Alvino} improves \eqref{GN}, while simple examples show that one cannot obtain further improvement in the second parameter.  That one should be interested in the Lorentz spaces in general, or the sharpening of the inequality \eqref{GN} found in \eqref{Alvino} in particular, can be seen from a number of perspectives.
A first motivation comes from real interpolation of Banach spaces \citelist{\cite{Lions_Peetre_1961}\cite{Lions_Peetre_1964}\cite{Aronszajn_Gagliardo_1965}\cite{Lions_1959}\cite{Gagliardo_1960}}, in which the Lorentz spaces arise readily
\begin{equation*}
  L^{p,q}(\Rset^n) 
  = 
    \bigl(
      L^1(\Rset^n),L^\infty(\Rset^n)
    \bigr)_{1 - \frac{1}{p},q}
    \,,
\end{equation*}
(see, e.g. \citelist{\cite{Triebel_1978}*{Th\'eor\`eme 1.18.16.1}\cite{Lions_Peetre_1964}*{Remarque (2.1)}\cite{Tartar_2007}*{Lemma 22.6}\cite{Adams_Fournier_2003}*{Theorem 7.26}}),
while the weak-$L^p$ space $L^{p,\infty}(\Rset^n)$ is a natural space for many harmonic analysis estimates, where weak-type endpoints can be upgraded to strong-type interpolated estimates.  In our specific considerations, the improvement in the second parameter is more than microscopic, as it encodes significantly more information in the trade off between differentiability and integrability than the classical inequality \eqref{GN}.  One perspective of this gain is that from \eqref{Alvino} it is possible to deduce Hardy's inequality 
\begin{align}\label{hardyineq}
    \int_{\Rset^n} 
      \frac
        {\abs{u (x)}}
        {|x|}
      \dif x 
  \leq 
    \tilde{C} 
    \int_{\Rset^n} 
      \abs{Du (x)}
      \dif x,
\end{align}
by a simple application of H\"older's inequality on the Lorentz space scale.

A vector analogue of \eqref{GN} follows easily from the same argument, yet such an inequality is not optimal, as one does not need the full gradient in order to obtain an embedding into $L^{n/(n-1)}(\Rset^n)$.  For example, a result of M.J. Strauss \cite{Strauss} shows that if one defines the symmetric part of the gradient
\begin{align*}
    Eu
  \defeq 
    \tfrac{1}{2}\bigl(D u + (D u)^T)
\end{align*}
(\(Eu\) is known in  elasticity as the linearized deformation tensor associated to the displacement \(u\)),
then one has the existence of a constant $C''>0$ such that
\begin{align}\label{Strauss}
\|u \|_{L^{n/(n-1)}(\Rset^n)} \leq C'' \| Eu \|_{L^1(\Rset^n,\Rset^n)}
\end{align}
for all vector fields $u \in W^{1,1}(\Rset^n,\Rset^n)$.  
The need for such inequalities arose in the work of Duvaut and Lions \cite{Duvaut_Lions}, while interest in the study of such spaces has expanded greatly into the theory of functions of bounded deformation \cite{Temam-Strang, Babadijan, BFT, DalMaso}.  The inequality \eqref{Strauss} is a special application of Strauss's work, which can be deduced from a more refined inequality \cite{Strauss}*{p.\thinspace{}208} in the spirit of a preceding work in the $L^2$ case due to De Figueiredo \cite{DeFigueiredo}. 

Similar estimates have been proved for other differential operators, including the Hodge complex, in a series of works initiated by Bourgain and Brezis (see \citelist{\cite{Bourgain_Brezis_2004}\cite{Bourgain_Brezis_2007}\cite{VanSchaftingen_2004}\cite{Lanzani_Stein_2005}}), while more generally, the second author has shown that the vector differential inequality
\begin{equation}
\label{ineq_EC}
    \norm[L^{\frac{n}{n - 1}}(\Rset^n,V)]{u} 
  \le 
    \tilde{\tilde{C}}
    \norm[L^1(\Rset^n, E)]{A (D) u}
\end{equation}
holds
for every vector field \(u \in C^\infty_c (\Rset^n, V)\)
if and only if the homogeneous first-order linear vector differential operator with constant coefficients \(A (D)\) is elliptic and canceling \cite{VanSchaftingen_2013}*{Theorem 1.3} (see also \cref{canceling}).

Given the sharpening of the inequality \eqref{GN} obtained in \eqref{Alvino}, a natural question is whether all elliptic and canceling operators admit such improvements (see \citelist{ \cite{VanSchaftingen_2013}*{Open problem 8.3}\cite{VanSchaftingen_2015}*{Open problem 2}\cite{Bourgain_Brezis_2007}*{Open problem 1}
\cite{VanSchaftingen_2010}*{Open problem 2}}). 
By relying on estimates into fractional Sobolev spaces and embeddings of these into Lorentz spaces, it has been proved that such improvements can be obtained in the Lorentz space \(L^{\frac{n}{n - 1}, q} (\Rset^n)\) for every \(q > 1\) %
\citelist{%
\cite{VanSchaftingen_2010}*{Theorem 3}%
\cite{VanSchaftingen_2013}*{Theorem 8.5}%
}.
The possibility of extending \eqref{Alvino} to elliptic and canceling operators is supported by the fact that the analogue of the  Hardy inequality \eqref{hardyineq} for such operators that it would imply is known to hold \cite{Bousquet_VanSchaftigen_2014}.

\medbreak

The main result of this paper is the positive answer to such a result in the plane, and a partial answer in higher dimensions, that such an embedding holds for elliptic and $(n-1)$-canceling operators.  From this we show how one can deduce the inequality \eqref{Alvino}, as well as the following sharpening of Strauss' result \eqref{Strauss}.

\begin{theorem}
\label{proposition_Korn_Sobolev}
There exists a constant $C>0$ such that
\[
    \norm[L^{\frac{n}{n - 1},1}(\Rset^n,\Rset^n)]{u}
  \le 
    C
    \norm[L^1(\Rset^n,\Rset^{n\times n})]{E u}
\]
for every \(u \in C^\infty_c (\Rset^n,\Rset^n)\).
\end{theorem}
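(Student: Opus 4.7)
The plan is to appeal to the main result announced in the introduction of this paper: the Lorentz--Sobolev inequality $\|u\|_{L^{n/(n-1),1}(\mathbb{R}^n,V)} \le C \|A(D)u\|_{L^1(\mathbb{R}^n,W)}$ valid for every elliptic and $(n-1)$-canceling first-order homogeneous linear differential operator $A(D)$ with constant coefficients. Applied to the symmetric gradient with $V = \mathbb{R}^n$, $W = \mathbb{R}^{n \times n}_{\mathrm{sym}}$, and symbol $A(\xi) v = \tfrac{1}{2}(v \otimes \xi + \xi \otimes v)$, the abstract inequality becomes precisely Theorem~\ref{proposition_Korn_Sobolev}. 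The entire proof therefore reduces to verifying the two algebraic properties of the symbol of $E$.

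Ellipticity is a short linear algebra check. If $A(\xi) v = 0$ for some $\xi \neq 0$, then the symmetric matrix $v \otimes \xi + \xi \otimes v$ vanishes; applying it to the vector $\xi$ yields $|\xi|^2 v + (v \cdot \xi) \xi = 0$, and taking the inner product with $\xi$ gives $2|\xi|^2 (v \cdot \xi) = 0$, so $v \cdot \xi = 0$. Substituting back, $|\xi|^2 v = 0$, and therefore $v = 0$. So the symbol is injective at every nonzero frequency.

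The $(n-1)$-canceling property of $E$ is where I expect the main obstacle to lie. Whatever its precise formulation in the paper, it should require the family of images $A(\xi)[\mathbb{R}^n]$ to interact sufficiently well as $\xi$ sweeps over lower-dimensional slices of frequency space. For a fixed $\xi$, the image $A(\xi)[\mathbb{R}^n]$ is the $n$-dimensional subspace of symmetric matrices whose nonzero entries are supported on the ``cross'' determined by $\xi$: in the basis where $\xi = e_1$, a generic element has a free entry in position $(1,1)$ and free entries $v_2,\ldots,v_n$ along the first row and column, all other entries being zero. Allowing $\xi$ to vary over any hyperplane produces a family of crosses whose sum is large enough, and whose intersection is small enough, to satisfy the abstract criterion. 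Carrying out this bookkeeping, i.e.\ matching the concrete tensor structure of $A(\xi) v = \tfrac{1}{2}(v\otimes \xi + \xi\otimes v)$ to the abstract $(n-1)$-canceling condition, is where I expect the real work of this particular deduction to lie; the functional-analytic content has already been extracted into the main theorem and need not be revisited.
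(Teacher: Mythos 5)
Your strategy of deducing the Korn--Sobolev inequality from \cref{theorem_n_1_canceling} is viable, but the proposal stops exactly where the content is. You verify ellipticity correctly, but for the $(n-1)$-canceling condition you only sketch an expectation (``whatever its precise formulation\dots I expect the real work to lie there''); the proof is not supplied. Recall that the condition requires
\[
    \bigcap_{\substack{W \subseteq \Rset^n\\ \dim W = n-1}}
      \linspan
        \bigl\{
          \tfrac12(v\otimes\xi+\xi\otimes v)
        \st
          \xi \in W,\ v \in \Rset^n
        \bigr\}
  = \{0\}.
\]
To close the gap you would argue: writing $W=w^\perp$ for a unit vector $w$, every matrix in the span satisfies $w^{T}\!\bigl(v\otimes\xi+\xi\otimes v\bigr)w = 2(w\cdot v)(w\cdot\xi)=0$, so the span lies in the hyperplane $\{M \text{ symmetric} : w^{T}Mw=0\}$; a dimension count in an orthonormal basis adapted to $w$ shows equality. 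Intersecting over all unit $w$, one is left with symmetric matrices whose associated quadratic form vanishes identically, hence with $\{0\}$. Only after this computation does \cref{theorem_n_1_canceling} yield the inequality. As written, your proof has a hole precisely at the step you flag as ``the real work.''

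It is also worth noting that the paper does not in fact take this abstract route. Its proof of \cref{proposition_Korn_Sobolev} bypasses the $(n-1)$-canceling machinery entirely and instead applies \cref{proposition_de_Figueiredo} (which rests on \cref{proposition_directional_derivatives} and \cref{lemma_de_Figueiredo}): one chooses $2n-1$ maximally linearly independent vectors $w_1,\dots,w_{2n-1}\in\Rset^n$, sets $v_i=w_i$, and uses the elementary identity $\dualprod{w_j}{Du[w_j]}=w_j\cdot Eu[w_j]$ (the antisymmetric part of $Du$ is killed by the quadratic form) to bound each directional derivative term by $\abs{w_j}^2\abs{Eu}$. That argument is shorter and entirely explicit; your route, once the $(n-1)$-canceling verification is filled in, is more systematic but requires invoking the full strength of \cref{theorem_n_1_canceling}, \cref{lemma_structure_n_1_canceling}, \cref{lemma_vectors_1dir} and \cref{lemma_descending}.
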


Luc Tartar mentioned in 2012 to the second author that he had a proof of \cref{proposition_Korn_Sobolev} that has not yet been published; it appeared afterwards that both our independent proofs of \cref{proposition_Korn_Sobolev} were following the same strategy.

\medbreak 

The idea underlying the improvement of \cref{proposition_Korn_Sobolev} is contained in the following general theorem which allows one to control the Lorentz norm by a product of directional derivatives.

\begin{theorem}
\label{proposition_directional_derivatives}
Let \(\ell \in \Nset\), let, for \(i \in \{1, \dotsc, n\}\) and \(j \in \{1, \dotsc, \ell\}\),
\(w_i^j \in \Rset^n\) and \(v_i^j \in V^*\).
If for every \(j \in \{1, \dotsc, \ell\}\), the vectors \(w_1^j, \dotsc, w_n^j\) are linearly independent in \(\Rset^n\)
and if 
\[
  \bigcap_{j = 1}^\ell \bigcup_{i = 1}^n (v_i^j)^\perp = \{0\},
\]
then for every function \(u \in C^\infty_c (\Rset^n, V)\),
\begin{equation*}
    \norm[L^{\frac{n}{n - 1}, 1}(\Rset^n,V)]{u}
  \le
    C
    \sum_{j=1}^\ell 
      \norm[L^1(\Rset^n)]{\dualprod{v_1^j}{D u[w_1^j]}}^\frac{1}{n}
      \dotsm
      \norm[L^1(\Rset^n)]{\dualprod{v_n^j}{D u[w_n^j]}}^\frac{1}{n}
  .    
\end{equation*}
\end{theorem}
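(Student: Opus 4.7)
The plan is to combine a pointwise consequence of the hypothesis with a non-orthogonal Loomis--Whitney inequality applied to the superlevel sets of $|u|$, together with Gagliardo's one-dimensional fundamental theorem of calculus on slices in each direction $w_i^j$.

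First I would establish the pointwise bound
\[
|u(x)| \le C_0 \sum_{j=1}^\ell \prod_{i=1}^n |\langle v_i^j, u(x) \rangle|^{1/n}, \qquad x \in \Rset^n.
\]
The hypothesis $\bigcap_j \bigcup_i (v_i^j)^\perp = \{0\}$ says exactly that, for every $\xi \in V \setminus \{0\}$, some $j$ satisfies $\langle v_i^j, \xi\rangle \neq 0$ for all $i$. Hence the map $\xi \mapsto \sum_{j} \prod_i |\langle v_i^j, \xi \rangle|^{1/n}$ is continuous, positively $1$-homogeneous, and strictly positive on the unit sphere of $V$; compactness supplies $C_0$.

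Next I would use the layer-cake formula $\|u\|_{L^{n/(n-1),1}} = \frac{n}{n-1}\int_0^\infty |\{|u| > t\}|^{(n-1)/n}\,dt$ and decompose the superlevel set $\{|u| > t\}$ according to which index $j$ dominates in the pointwise bound. On each such piece $E$, the non-orthogonal Loomis--Whitney inequality relative to the basis $(w_1^j, \ldots, w_n^j)$ gives $|E|^{n-1} \le c_j \prod_{i=1}^n |\pi_i^j(E)|$, where $\pi_i^j$ is the projection parallel to $w_i^j$ onto a complementary hyperplane and $c_j$ absorbs the linear change of variables sending this basis to the standard one. The key step is then to control each projection $|\pi_i^j(E)|$ by the distribution function of a scalar quantity depending only on the matched derivative $\langle v_i^j, Du[w_i^j]\rangle$: here Gagliardo's lemma enters through the one-dimensional fundamental theorem applied along lines in direction $w_i^j$, giving
\[
\sup_{s \in \Rset} |\langle v_i^j, u(y + sw_i^j)\rangle| \le \tfrac{1}{2}\int_\Rset |\langle v_i^j, Du[w_i^j]\rangle(y + s w_i^j)|\,ds =: H_i^j(y),
\]
with $\|H_i^j\|_{L^1} = \|\langle v_i^j, Du[w_i^j]\rangle\|_{L^1(\Rset^n)}$ by Fubini.

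Integrating in $t$ and applying H\"older's inequality with $n$ equal exponents then converts the integrated product of distribution functions into $\prod_{i=1}^n \|\langle v_i^j, Du[w_i^j]\rangle\|_{L^1}^{1/n}$, and summing over $j$ closes the proof. The main obstacle I foresee is arranging the level-set decomposition and the Loomis--Whitney projections so that only the matched derivatives $\langle v_i^j, Du[w_i^j]\rangle$ appear after the one-dimensional slicing, rather than unmatched cross terms $\langle v_k^{j'}, Du[w_i^j]\rangle$ for $(k,j') \ne (i,j)$, which are a priori not controlled by the right-hand side. The ``nonorthogonal'' use of Loomis--Whitney highlighted in the abstract is precisely the device that permits aligning the projection geometry with the $(i,j)$-matching built into the statement of the theorem.
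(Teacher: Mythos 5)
Your high-level strategy matches the paper's: decompose the superlevel sets of $|u|$, apply a nonorthogonal Loomis--Whitney inequality adapted to the basis $(w_1^j,\dotsc,w_n^j)$, control the projections by the one-dimensional fundamental theorem of calculus, and conclude by integrating the distribution functions and applying H\"older. The Gagliardo estimate $\sup_s |\langle v_i^j, u(y+sw_i^j)\rangle| \le \tfrac12 \int_\Rset |\langle v_i^j, Du[w_i^j]\rangle| $ and the final H\"older step are exactly right.

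However, there is a genuine gap in the opening pointwise bound, and it is not a mere detail to arrange: your bound
\[
|u(x)| \le C_0 \sum_{j=1}^\ell \prod_{i=1}^n |\langle v_i^j, u(x) \rangle|^{1/n}
\]
has the wrong structure to feed into the level-set decomposition you then attempt. If you ``decompose according to which $j$ dominates,'' each piece is contained in a set of the form $E_j(t) = \{x : \prod_i |\langle v_i^j, u(x)\rangle|^{1/n} > \tau\}$. To run Loomis--Whitney you must bound $\mathcal{H}^{n-1}(\Pi_i^j(E_j(t)))$ by the measure of a superlevel set of $H_i^j$ alone, i.e.\ you need $\Pi_i^j(E_j(t)) \subseteq \{y : \sup_s |\langle v_i^j, u(y+sw_i^j)\rangle| \gtrsim t\}$. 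But the condition that the \emph{product} $\prod_k |\langle v_k^j, u\rangle|^{1/n}$ is large at some point on the line $y+\Rset w_i^j$ tells you nothing about the single factor $|\langle v_i^j, u\rangle|$ there --- one factor can be tiny while the product remains large. This is different from the cross-term mismatch you flag as the ``main obstacle''; it is an intrinsic failure of the product-type pointwise estimate.

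The paper fixes this by proving a \emph{max--min} pointwise bound instead (its Lemma on the estimate by projections): under the hypothesis on the $v_i^j$, the function $\gamma(v) = \max_j \min_i |\langle v_i^j, v\rangle|$ is a positively $1$-homogeneous continuous function that vanishes only at $0$, so by compactness $|v| \le C\,\gamma(v)$. This gives the clean set inclusion
\[
\{|u| \ge C t\} \subseteq \bigcup_{j=1}^\ell \bigcap_{i=1}^n \bigl\{ |\langle v_i^j, u\rangle| \ge t \bigr\},
\]
and after subadditivity of $\mathcal{L}^n$ and of $\mu \mapsto \mu^{1-1/n}$ one is reduced, for each $j$, to estimating $\mathcal{L}^n\bigl(\bigcap_i \{|\langle v_i^j, u\rangle| \ge t\}\bigr)^{1-1/n}$. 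Here Loomis--Whitney applies, and --- this is the point you need --- when you project in direction $w_i^j$ you may \emph{discard} the conditions for $k\ne i$ by monotonicity, so $\Pi_i^j\bigl(\bigcap_k \{\dotsb\}\bigr) \subseteq \Pi_i^j\bigl(\{|\langle v_i^j, u\rangle| \ge t\}\bigr) \subseteq \{y : H_i^j(y) \ge t\}$, exactly matching your Gagliardo step. So replace your sum-of-products pointwise inequality by the max--min estimate (proved by the same homogeneity/compactness argument you already sketched) and the rest of your outline closes.
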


From \cref{proposition_directional_derivatives}, we obtain a complete answer in the two-dimensional case:

\begin{theorem}
\label{theorem_canceling_2d}
Let $V$ and $E$ be finite-dimensional spaces and let \(A (D):C^\infty_c (\Rset^2, V) \to C^\infty_c (\Rset^2, E)\) be a first-order homogeneous linear differential operator with constant coefficients. 
There exists a constant $C>0$ such that
\[
    \norm[L^{2, 1}(\Rset^2,V)]{u} 
  \le 
    C
    \norm[L^1(\Rset^2,E)]{A (D) u}
\]
for every \(u \in C^\infty_c (\Rset^2, V)\) if and only if the operator \(A (D)\) is elliptic and canceling.
\end{theorem}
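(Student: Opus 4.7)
Necessity is immediate: the continuous embedding $L^{2,1}(\Rset^2) \hookrightarrow L^{2,2}(\Rset^2) = L^2(\Rset^2)$ transforms the hypothesized estimate into \eqref{ineq_EC} with $n = 2$, whereupon \cref{canceling} forces $A(D)$ to be elliptic and canceling.

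For sufficiency, assume $A(D) = A_1 \partial_1 + A_2 \partial_2$ is elliptic and canceling. The plan is to invoke \cref{proposition_directional_derivatives} with data $(w_i^j, v_i^j)$ constructed from $A$ so that each directional derivative appearing on the right-hand side is dominated pointwise by a linear functional of $A(D) u$. The key observation is that for any $w \in \Rset^2 \setminus \{0\}$ and any $\phi \in E^*$ annihilating $A(w^\perp)[V]$, the linear map $\xi \mapsto \phi \circ A(\xi)$ from $\Rset^2$ to $V^*$ vanishes on the line $\Rset w^\perp$ and hence factors as $\phi \circ A(\xi) = (w \cdot \xi)\, v$ for a unique $v \in V^*$. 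This yields the pointwise identity $\langle v, Du[w]\rangle = \langle \phi, A(D) u\rangle$, and in particular $\| \langle v, Du[w]\rangle \|_{L^1(\Rset^2)} \le \|\phi\|_{E^*}\, \|A(D) u\|_{L^1(\Rset^2, E)}$.

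The core algebraic claim is then the following: \emph{for every $x \in V \setminus \{0\}$ there exist linearly independent $w_1, w_2 \in \Rset^2$ and $\phi_i \in E^*$ with $\phi_i \circ A(w_i^\perp) = 0$ whose associated $v_i \in V^*$ satisfy $v_i(x) \ne 0$ for $i = 1, 2$.} Granting the claim, a compactness argument on the projective space $\mathbb{P}(V)$ extracts a finite family $\{(w_i^j, v_i^j, \phi_i^j) : 1 \le j \le \ell,\ i \in \{1,2\}\}$ with $w_1^j, w_2^j$ linearly independent for each $j$ and the covering condition $\bigcap_j ((v_1^j)^\perp \cup (v_2^j)^\perp) = \{0\}$. \cref{proposition_directional_derivatives} combined with the $L^1$ bound above then yields $\|u\|_{L^{2,1}(\Rset^2, V)} \le C' \|A(D)u\|_{L^1(\Rset^2, E)}$, completing the argument.

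The main obstacle is the algebraic claim. By duality, it is equivalent to showing that, under ellipticity and canceling, no nonzero $x \in V$ satisfies $A(w) x \in A(w^\perp)[V]$ for every $w \in \Rset^2 \setminus \{0\}$. Given such an $x$, ellipticity would produce a unique real-analytic map $w \mapsto y_w \in V$ with $A(w) x = A(w^\perp) y_w$. Parameterizing $w = (\cos\theta, \sin\theta)$ and setting $A^\pm := A_1 \mp i A_2$, the Fourier expansion $y_w = \sum_n y_n e^{in\theta}$ converts the identity into the recursion $A^- y_n = A^+ y_{n-2}$ for $n \notin \{0, 2\}$, coupled to inhomogeneous boundary conditions at $n = 0, 2$ involving $x$. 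The canceling hypothesis, together with the decay of the Fourier coefficients of the smooth function $y_w$, will then be used to preclude nontrivial solutions, forcing $x = 0$.
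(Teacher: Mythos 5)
Your overall strategy matches the paper's: necessity via the embedding $L^{2,1}\hookrightarrow L^{2}$ plus the known characterization of the $L^{n/(n-1)}$ estimate, and sufficiency by constructing data $(w_i^j,v_i^j)$ so that $\langle v_i^j, Du[w_i^j]\rangle$ is a pointwise linear function of $A(D)u$ and then feeding this into \cref{proposition_directional_derivatives}. The factorization $\phi\circ A(\xi)=(w\cdot\xi)\,v$ for $\phi$ annihilating $A(w^\perp)[V]$, the covering condition on $\bigcap_j\bigcup_i(v_i^j)^\perp$, and the compactness argument to extract a finite family are all sound and correspond to the paper's \cref{lemma_structure_n_1_canceling} and \cref{lemma_descending}.

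The gap is in your proof of the core algebraic claim. You correctly reduce it, by duality, to showing that no nonzero $x\in V$ has $A(w)[x]\in A(w^\perp)[V]$ for every $w\neq 0$, and you set up the real-analytic map $w\mapsto y_w$, the Fourier recursion $A^-y_n=A^+y_{n-2}$, and so on. But the final sentence — ``The canceling hypothesis, together with the decay of the Fourier coefficients \dots\ will then be used to preclude nontrivial solutions, forcing $x=0$'' — is a placeholder, not an argument. It is not at all clear how to conclude from the recursion: the operators $A^\pm=A_1\mp iA_2$ act from $V$ to $E$, they need not be injective or square, and the exponential decay of the Fourier coefficients of $y_w$ imposes no obvious spectral constraint on the (possibly ill-defined) transfer maps $(A^-)^{-1}A^+$ that would contradict canceling. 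As stated, the deduction does not go through.

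What is missing is in fact simpler than the Fourier machinery. The paper proves the claim (in the form of \cref{lemma_vectors_1dir}) by a direct linear-algebra iteration: given $v_*\neq 0$ and already-constructed $w_1,\dotsc,w_\ell$, pick any $\xi_{\ell+1}\neq 0$ orthogonal to $w_1,\dotsc,w_\ell$; by ellipticity $A(\xi_{\ell+1})[v_*]\neq 0$; by the canceling condition (in $\Rset^2$, the $1$--canceling and $(n-1)$--canceling conditions coincide) there is a line $W_{\ell+1}$ with $A(\xi_{\ell+1})[v_*]\notin\linspan\{A(\xi)[v]:\xi\in W_{\ell+1},\,v\in V\}$; this forces $\xi_{\ell+1}\notin W_{\ell+1}$, so one can take $w_{\ell+1}$ orthogonal to $W_{\ell+1}$ with $\langle\xi_{\ell+1},w_{\ell+1}\rangle=1$, and the relation $\langle\xi_{\ell+1},w_i\rangle=0$ for $i\le\ell$ gives linear independence of $w_1,\dotsc,w_{\ell+1}$. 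Choosing $e_{\ell+1}$ annihilating $A(W_{\ell+1})[V]$ with $\langle e_{\ell+1},A(\xi_{\ell+1})[v_*]\rangle=1$ produces $v_{\ell+1}=A(\xi_{\ell+1})^*[e_{\ell+1}]$ with $v_{\ell+1}(v_*)=1$ and the desired factorization. You should replace the Fourier sketch with an argument of this type; otherwise the sufficiency direction remains unproved.
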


\Cref{theorem_canceling_2d} shows in particular that when \(n = 2\) the Sobolev inequality \eqref{ineq_EC} holds if and only if the corresponding limiting estimate in Lorentz spaces holds.

The proof of \cref{theorem_canceling_2d} in \cref{canceling} shows in higher dimensions \(n > 2\) that one has such inequalities for all elliptic and $(n-1)$--canceling operators. 

\Cref{theorem_canceling_2d} motivates the following

\begin{question}
Let \(n \ge 3\) and let $V$ and $E$ be finite-dimensional spaces.  Further suppose that the first-order homogeneous linear differential operator  with constant coefficients \(A (D):C^\infty_c (\Rset^n, V) \to C^\infty_c (\Rset^n, E)\) is elliptic and canceling.  Can one show the existence of a constant $C>0$ such that the inequality
\[
    \norm[L^{\frac{n}{n - 1},1}(\Rset^n,V)]{u}
  \le 
    C
    \norm[L^1(\Rset^n,E)]{A (D) u}
\]
holds for every \(u \in C^\infty_c (\Rset^n, V)\)?
\end{question}

Another open question stems from the fact that higher-order operators satisfy Sobolev estimates \cite{VanSchaftingen_2013}*{Theorem 1.3}, Hardy inequalities \cite{Bousquet_VanSchaftigen_2014}*{Theorem 1.2} and non-optimal Lorentz estimates \cite{VanSchaftingen_2013}*{Theorem 8.5}.

\begin{question}
Let \(n \ge 2\), \(k \ge 2\) and let $V$ and $E$ be finite-dimensional spaces.  Further suppose that the \(k\)--th order homogeneous linear differential operator  with constant coefficients \(A (D):C^\infty_c (\Rset^n, V) \to C^\infty_c (\Rset^n, E)\) is elliptic and canceling.  Can one show the existence of a constant $C>0$ such that the inequality
\[
    \norm[L^{\frac{n}{n - 1},1}(\Rset^n,V)]{D^{k - 1} u}
  \le 
    C
    \norm[L^1(\Rset^n,E)]{A (D) u}
\]
holds for every \(u \in C^\infty_c (\Rset^n, V)\)?
\end{question}

The plan of the paper is as follows.  In \cref{preliminaries}, we recall some requisite preliminaries regarding Lorentz spaces.  In \cref{plane}, we give a proof of  \cref{proposition_Korn_Sobolev} in the plane, as it illustrates well the idea of the more general \cref{proposition_directional_derivatives}.  
The main ingredient for higher dimensions is a version of an inequality of Loomis and Whitney \cite{Loomis_Whitney_1949} to nonorthogonal coordinate systems. We show how this can  be obtained from a change of coordinates and a Lemma of Gagliardo in \cite{Gagliardo} in a presentation that tries to keep the geometric content of the inequality.  
In \cref{proofs} we prove \cref{proposition_directional_derivatives}, from which \cref{proposition_Korn_Sobolev} is deduced in \cref{section_sirect_consequences}.  Finally, in \cref{canceling} we prove a general result for elliptic and $(n-1)$--canceling operators.

\section{Preliminaries}\label{preliminaries}
In the sequel, with an abuse of notation we utilize  $|\cdot|$ to denote the norm in any finite-dimensional vector space, e.g. the absolute value, the norm in Euclidean space $\Rset^n$, and the norm in $V$ and $E$. 

For \(n \in \Nset\), $1<p<+\infty$ and $1 \leq q \leq +\infty$, we denote by $L^{p,q} (\Rset^n)$ the Lorentz space \cite{Lorentz_1950} (see also for example \cite{Grafakos}) with quasinorm
\begin{equation}
\label{def_Lorentz_norm}
    \norm[L^{p,q}(\Rset^n)]{u}^q 
  \defeq 
    p 
    \int_0^{+\infty} 
    \bigl(t \, \mathcal{L}^n(\{ x \in \Rset^n \st \abs{u (x)}>t\})^{1/p}\bigr)^q 
    \,
    \frac{\dif t}{t},
\end{equation}
where \(\mathcal{L}^n (A)\) denotes the Lebesgue measure of a measurable set \(A \subset \Rset^n\).
Equivalently, if the function \(\abs{u}^* : (0, +\infty) \to [0, + \infty)\) is the nonincreasing rearrangement of \(\abs{u}\), that is, if for every \(t \in (0, + \infty)\), \(\mathcal{L}^1 (\{ s \in [0, + \infty) \st \abs{u}^* (s) > t\})
= \mathcal{L}^n (\{ x \in \Rset^n \st \abs{u (x)} > t\})\),
then 
\begin{equation*}
    \|u \|^q_{L^{p,q}(\Rset^n)}
 =
    p
    \int_0^{+\infty} 
    \bigl(t\, \mathcal{L}^1 (\{ s \in (0, +\infty) \st \abs{u}^* (s)>t\})^{1/p}\bigr)^q \frac{\dif t}{t}
 =
    \int_0^{+ \infty} 
      \abs{u}^* (s)\, s^\frac{q}{p}
      \,
      \frac{\dif s}{s}\,.
\end{equation*}
In particular, Cavalieri's principle shows that $L^{p,p}(\Rset^n)=L^p(\Rset^n)$, while the spaces are nested increasingly with respect to the second parameter:
\begin{align*}
L^{p,1}(\Rset^n) \subset L^{p,q}(\Rset^n) \subset L^{p,\infty}(\Rset^n).
\end{align*}
The quantity \(\norm[L^{p, q} (\Rset^n)]{\cdot}\) is a norm when \(q \le p\) \cite{Lorentz_1950}*{Theorem 1}.

The definition of Lorentz spaces and norms by \eqref{def_Lorentz_norm} is equivalent when \(p > 1\) and \(q \ge 1\) to the definition by interpolation (see for example \citelist{\cite{Triebel_1978}*{Th\'eor\`eme 1.18.16.1}\cite{Lions_Peetre_1964}*{Remarque (2.1)}\cite{Tartar_2007}*{Lemma 22.6}\cite{Adams_Fournier_2003}*{Theorem 7.26}}) and to the definition through \emph{averaged rearrangements} which is also common \cite{Ziemer_1989}*{\S 1.8}.

\section{Proof of \cref{proposition_Korn_Sobolev} in the Planar case}\label{plane}
We here give a proof of \cref{proposition_Korn_Sobolev} in the plane \(\Rset^2\). That is, we suppose that the function $u:\Rset^2 \to \Rset^2$ is smooth and has compact support, and we will obtain an estimate for
\begin{align*}
    \|u_1\|_{L^{2,1}(\Rset^2)} 
  = 
    2
    \int_0^\infty 
      \mathcal{L}^2 \bigl(\{ x \in \Rset^2 \st |u_1 (x) |>t\}\bigr)^\frac{1}{2} 
      \dif t
\end{align*}
by the quantity
\begin{align*}
&\int_{\Rset^2} \left| \frac{\partial u_1}{\partial x_1}\right|+ \left| \frac{\partial u_1}{\partial x_2} + \frac{\partial u_2}{\partial x_1}\right|+ \left| \frac{\partial u_2}{\partial x_2}\right|, \\
\end{align*}
as an analogous argument implies a similar inequality for $u_2$.  

First let us recall Fournier's argument \cite{Fournier}*{Appendix} of how to use the Loomis--Whitney inequality to obtain the embedding \eqref{Alvino} where one assumes the full derivative \(Du\) is in $L^1$.  In our setting of the plane this reduces to the degenerate case of the Loomis--Whitney inequality that the area \(A\) of a set can be bounded by the product of its length \(l\) and width \(w\):
\begin{align}\label{areainequality}
A \leq l\times w.
\end{align}
More specific to our problem, this takes for every \(t > 0\) the form of the inequality
\begin{multline*}
\mathcal{L}^2 (\{|u_1|>t\}) \\
  \leq 
    \mathcal{H}^1\Bigl(\Bigl\{x_1 \in \Rset \st \sup_{x_2 \in \Rset} |u_1(x_1,x_2)|>t\Bigr\}\Bigr)\, 
    \mathcal{H}^1\Bigl(\Bigl\{x_2 \in \Rset \st \sup_{x_1 \in \Rset} |u_1(x_1,x_2)|>t\Bigr\}\Bigr),
\end{multline*}
where \(\mathcal{H}^1\) denotes the one-dimensional Hausdorff measure and 
\(\{|u_1|>t\} = \{(x_1,x_2) \in \Rset^2 : |u_1(x_1,x_2)|>t\}\).
One then observes that a separate integration of the two terms on the right-hand side yields
\begin{gather*}
\int_0^\infty \mathcal{H}^1\Bigl(\Bigl\{x_1 \in \Rset \st \sup_{x_2 \in \Rset} |u_1(x_1,x_2)|>t\Bigr\}\Bigr)\dif t 
= \int_{\Rset} \sup_{x_2 \in \Rset} |u_1(x_1,x_2)|\dif x_1 
\\
\intertext{and}
\int_0^\infty \mathcal{H}^1\Bigl(\Bigl\{x_2 \in \Rset \st \sup_{x_1 \in \Rset} |u_1(x_1,x_2)|>t\Bigr\}\Bigr)\dif t = \int_{\Rset} \sup_{x_1 \in \Rset} |u_1(x_1,x_2)|\dif x_2,
\end{gather*}
which can be further estimated by the derivatives via the fundamental theorem of calculus as
\begin{gather}
\label{eq_KwEMldOpUbn}
\int_{\Rset} \sup_{x_2 \in \Rset} |u_1(x_1,x_2)|\dif x_1  \leq \int_{\Rset^2} \left|\frac{\partial u_1}{\partial x_2}(x_1,x_2)\right|\dif x_2 \dif x_1,\\
\intertext{and}
\label{eq_wcnn0jIC0UC}
\int_{\Rset} \sup_{x_1 \in \Rset} |u_1(x_1,x_2)|\dif x_2 \leq \int_{\Rset^2} \left|\frac{\partial u_1}{\partial x_1}(x_1,x_2)\right|\dif x_1 \dif x_2.
\end{gather}
Therefore, from the definition of the Lorentz norm \eqref{def_Lorentz_norm} one has 
\begin{multline*}
\int_0^\infty \mathcal{L}^2 \bigl(\{ |u_1|>t\}\bigr)^\frac{1}{2} \dif t
\leq \int_0^\infty \biggl(\mathcal{H}^1\Bigl(\Bigl\{x_1 \in \Rset \st \sup_{x_2 \in \Rset} |u_1(x_1,x_2)|>t\Bigr\}\Bigr)\\
\times  \mathcal{H}^1\Bigl(\Bigl\{x_2 \in \Rset \st \sup_{x_1 \in \Rset} |u_1(x_1,x_2)|>t\Bigr\}\Bigr)\biggr)^\frac{1}{2} \dif t,
\end{multline*}
while the Cauchy--Schwarz inequality implies
\begin{multline}
\label{eq_XA4vzjlx0LX}
\int_0^\infty \mathcal{L}^2 \bigl(\{ |u_1|>t\}\bigr)^\frac{1}{2} \dif t 
\leq \left(\int_0^\infty \mathcal{H}^1\Bigl(\Bigl\{x_1 \in \Rset \st \sup_{x_2 \in \Rset} |u_1(x_1,x_2)|>t\Bigr\}\Bigr)\dif t\right)^\frac{1}{2} 
\\
\times \left(\int_0^\infty \mathcal{H}^1\Bigl(\Bigl\{x_2 \in \Rset \st \sup_{x_1 \in \Rset} |u_1(x_1,x_2)|>t\Bigr\}\Bigr)\dif t \right)^\frac{1}{2}.
\end{multline}
Finally one combines the inequalities \eqref{eq_KwEMldOpUbn}, \eqref{eq_wcnn0jIC0UC} and \eqref{eq_XA4vzjlx0LX} to deduce the multiplicative inequality
\begin{align*}
\int_0^\infty \mathcal{L}^2 \bigl(\{ |u_1|>t\}\bigr)^\frac{1}{2} \dif t \leq \left(\int_{\Rset^2} \left|\frac{\partial u_1}{\partial x_2}(x_1,x_2)\right|\dif x_2 \dif x_1 \times  \int_{\Rset^2} \left|\frac{\partial u_1}{\partial x_1}(x_1,x_2)\right|\dif x_1 \dif x_2\right)^\frac{1}{2},
\end{align*}
while the arithmetic geometric mean inequality yields the additive form
\begin{align*}
\int_0^\infty \mathcal{L}^2 \bigl(\{ |u_1|>t\}\bigr)^\frac{1}{2} \dif t \leq \frac{1}{2} \int_{\Rset^2} \left|\frac{\partial u_1}{\partial x_2}(x_1,x_2)\right|\dif x_2 \dif x_1+ \frac{1}{2} \int_{\Rset^2} \left|\frac{\partial u_1}{\partial x_1}(x_1,x_2)\right|\dif x_1 \dif x_2.
\end{align*}

Now, this argument is not sufficient to obtain the Korn--Sobolev inequality, since in general one has no control over the quantity
\begin{align*}
\int_{\Rset^2} \left|\frac{\partial u_1}{\partial x_2}(x_1,x_2)\right|\dif x_2 \dif x_1.
\end{align*}
However, in this setting one still assumes the finiteness of 
\begin{align*}
\int_{\Rset^2} \left|\frac{\partial u_1}{\partial x_1}(x_1,x_2)\right|\dif x_2 \dif x_1,
\end{align*}
which tracing back through the inequalites translates to control over the width $w$ in \eqref{areainequality}.  In general we cannot hope to control the length $l$ in this way, but it turns out we can control measurements in certain other directions.  In particular, we can estimate the measurement of length in both directions whose angle with the $x_2$ axis is $\pi/4$.  In either case the measurement of $l'$ gives us an upper bound on an estimate for $l$ by simple trigonometry, leading to the inequality
\begin{align}\label{areainequalityprime}
A \leq \sqrt{2} \;l' \times w,
\end{align}
which as we will see will be sufficient to obtain our result.

We now commence with the
\begin{proof}[Proof of \cref{proposition_Korn_Sobolev} in the planar case]
Let us now see how this ability to control the area with respect to nonorthogonal measurements yields the desired inequality.  
First, we note that since, by the triangle inequality,
\begin{equation}
\abs{u_1} \le \frac{\abs{u_1 + u_2} + \abs{u_1 - u_2}}{2},
\end{equation}
on \(\Rset^2\),
we have for every \(t > 0\)
\begin{equation*}
  \{|u_1|>t\} \subset \{|u_1|>t, |u_1+u_2|>t\}\cup  \{ |u_1|>t,|u_1-u_2|>t\},
\end{equation*}
and therefore subadditivity of the measure \(\mathcal{L}^2\) and of the square root implies that 
\begin{equation}
\label{twopieces}
    \mathcal{L}^2 \bigl(\{ |u_1|>t\}\bigr)^\frac{1}{2} 
  \leq  
      \mathcal{L}^2 \bigl(\{ |u_1|>t,|u_1+u_2|>t\}\bigr)^\frac{1}{2}
    +
      \mathcal{L}^2 \bigl(\{|u_1|>t, |u_1-u_2|>t\}\bigr)^\frac{1}{2}.
\end{equation}
Let us estimate the first term on the right-hand side.  We apply the inequality \eqref{areainequalityprime} to deduce that for each \(t > 0\),
\begin{multline*}
  \mathcal{L}^2 \bigl(\{ |u_1|>t,|u_1+u_2|>t\}\bigr)\\
\leq 
  {\sqrt{2}}
  \;
  \mathcal{H}^1
    \Bigl(
      \Bigl\{s \in \mathbb{R} \st \sup_{x_1-x_2=s} |u_1|>t,\sup_{x_1-x_2=s}|u_1+u_2|>t\Bigr\}\Bigr)  \\
  \times 
  \mathcal{H}^1 \Bigl(\Bigl\{ x_1 \in \Rset \st \sup_{x_2 \in \Rset} |u_1|>t, \sup_{x_2 \in \Rset}|u_1+u_2|>t\Bigr\}\Bigr).
\end{multline*}
Then the removal of certain inequalities in the sets only increases the measure, we find for each \(t >0\)
\begin{multline*}
 \mathcal{L}^2(\{ |u_1|>t,|u_1+u_2|>t\})
  \leq 
    {\sqrt{2}}
    \;
    \mathcal{H}^1
      \Bigl(\Bigr\{s \in \mathbb{R} \st \sup_{x_1-x_2=s}|(u_1+u_2) (x_1, x_2)|>t\Bigr\}\Bigr)  \\
\times \mathcal{H}^1\Bigl(\Bigl\{ x_1 \in \Rset: \sup_{x_2 \in \Rset} |u_1 (x_1, x_2)|>t,\Bigr\}\Bigr).
\end{multline*} 
Now while the integral in $t$ of the second term on the right has been computed, for the first we find
\begin{multline*}
    \int_0^\infty 
      \mathcal{H}^1
        \Bigl(\Bigr\{ s \in \Rset \st \sup_{x_1 - x_2 = s} \abs{(u_1 + u_2)(x_1, x_2)}>t \Bigr\}\Bigr) \dif t\\[-.5em]
        = \int_{\Rset} \sup_{x_1-x_2=s}|(u_1+u_2)(x_1,x_2)| \dif s.
\end{multline*}
We claim that this diagonal length can be controlled by the symmetric part of the gradient via the estimate
\begin{align}\label{diagonalclaim}
\int_{\Rset} \sup_{x_1-x_2=s}|(u_1+u_2)(x_1,x_2)| \dif s &\leq \int_{\Rset^2}  \left| \left(\frac{\partial u_1}{\partial x_1} + \frac{\partial u_2}{\partial x_2} +\frac{\partial u_1}{\partial x_2}  + \frac{\partial u_2}{\partial x_1}\right)(x_1,x_2) \right|\dif x_1 \dif x_2,
\end{align}
from which the desired bound can be deduced, as the Cauchy--Schwarz  inequality yields
\begin{multline*}
\int_0^\infty \mathcal{L}^2 (\{ |u_1|>t, |u_1+u_2|>t\})^\frac{1}{2} \dif t\\[-.7em]
\leq \left(\int_0^\infty \mathcal{H}^1\Bigl(\Bigl\{x_1 \in \Rset \st \sup_{x_2 \in \Rset} |u_1(x_1,x_2)|>t\Bigr\}\Bigr)\dif t\right)^\frac{1}{2}\\ 
\times \left(\int_0^\infty \mathcal{H}^1\Bigl(\Bigl\{s \in \mathbb{R}: \sup_{x_1 - x_2=s}|(u_1+u_2)(x_1, x_2)|>t\Bigr\}\Bigr)\dif t \right)^\frac{1}{2},
\end{multline*}
and therefore 
\begin{equation*}
 \begin{split}
\int_0^\infty \mathcal{L}^2 (\{ |u_1|>t&,|u_1+u_2|>t\})^\frac{1}{2} \dif t\\[-.7em]
&\leq \left(\int_{\Rset} \sup_{x_1 \in \Rset} |u_1(x_1,x_2)|\dif x_1\right)^\frac{1}{2} \times\left(\int_{\Rset} \sup_{x_1-x_2=s}|(u_1+u_2)(x_1,x_2)| \dif s\right)^\frac{1}{2} \\
&\leq\left( \int_{\Rset^2} \left|\frac{\partial u_1}{\partial x_1}(x_1,x_2)\right|\dif x_2 \dif x_1\right)^\frac{1}{2} \\
&\qquad\qquad \times \left( \int_{\Rset^2}  \left| \left(\frac{\partial u_1}{\partial x_1} + \frac{\partial u_2}{\partial x_2} +\frac{\partial u_1}{\partial x_2}  + \frac{\partial u_2}{\partial x_1}\right)(x_1,x_2) \right|\dif x_1 \dif x_2\right)^\frac{1}{2}.
 \end{split}
\end{equation*}

It therefore remains to prove the claim \eqref{diagonalclaim}, as well as a similar estimate relating to a bound for the measure of the set $\{ |u_1|>t,|u_1-u_2|>t\}$.  These two estimates are achieved by a modification of the argument of Gagliardo \cite{Gagliardo} and Nirenberg \cite{Nirenberg}, that one can integrate in any direction and pair the gradient with an arbitrary covector (and we continue to restrict our consideration to the plane):   For any vector $v \in \Rset^2=(\Rset^2)^*$ and any vector $w \in \Rset^2$ one has
\begin{align*}
|\langle v, u(x_1,x_2)\rangle| \leq \int_{\Rset}  \left|  \langle v, Du(x+tw)w\rangle \right|\dif t.
\end{align*}
Here again the choices \(
w=v=(1,1)\) and \(w=v=(1,-1)\)
lead to the inequalities
\begin{align*}
|(u_1+u_2)(x_1,x_2)| &\leq \int_{\Rset}  \left| \left(\frac{\partial u_1}{\partial x_1} + \frac{\partial u_2}{\partial x_2} +\frac{\partial u_1}{\partial x_2}  + \frac{\partial u_2}{\partial x_1}\right)(x_1+t,x_2+t) \right|\dif t, \\
\intertext{and}
|(u_1-u_2)(x_1,x_2)| &\leq \int_{\Rset}  \left| \left(\frac{\partial u_1}{\partial x_1}  + \frac{\partial u_2}{\partial x_2} -\left(\frac{\partial u_1}{\partial x_2}  + \frac{\partial u_2}{\partial x_1}\right)  \right)(x_1+t,x_2-t) \right|\dif t.
\end{align*}
Making a translation in $t$, one observes that the integrals on the right-hand-side depend only on $x_1-x_2$ and $x_1+x_2$, respectively.  Letting $x_1-x_2=s \in \mathbb{R}$ in the former and $x_1+x_2=s \in \mathbb{R}$ in the latter, for each such 
$s$ we can take the supremum over all such pairs $(x_1,x_2)$ and then integrate in $s$ to obtain
\begin{align*}
\int_{\Rset} \sup_{x_1-x_2=s}|(u_1+u_2)(x_1,x_2)| \dif s &\leq \int_{\Rset^2}  \left| \left(\frac{\partial u_1}{\partial x_1} + \frac{\partial u_2}{\partial x_2} +\frac{\partial u_1}{\partial x_2}  + \frac{\partial u_2}{\partial x_1}\right)(t,t-s) \right|\dif t\dif s \\
\intertext{and}
\int_{\Rset} \sup_{x_1+x_2=s}|(u_1-u_2)(x_1,x_2)| \dif s &\leq \int_{\Rset^2}  \left| \left(\frac{\partial u_1}{\partial x_1}  + \frac{\partial u_2}{\partial x_2} -\left(\frac{\partial u_1}{\partial x_2}  + \frac{\partial u_2}{\partial x_1}\right)  \right)(t,s-t) \right|\dif t \dif s.
\end{align*}
It only remains to change variables to see that the claim has been demonstrated.
\end{proof}

\section{Gagliardo's lemma and Loomis--Whitney inequality}\label{gagliardo}

Our proof is based on a geometric inequality between the measure of a set and the measure of its projections on hyperplanes which goes back to Loomis and Whitney \cite{Loomis_Whitney_1949}.  To state the following generalization of their inequality, we require the notion of \((n -1)\)--dimensional Hausdorff measure of a set $A$, which we  denote by \(\mathcal{H}^{n - 1} (A)\), see e.g.  \cite{Evans_Gariepy_1992}*{\S 2.1, p.~60}.

\begin{lemma}%
[Loomis--Whitney inequality]
\label{lemma_Loomis_Whitney}
Let \(w_1, \dotsc, w_n\) be a basis of unit-length vectors of \(\Rset^n\) and let, for every \(j \in \{1, \dotsc, n\}\),
\(\Pi_j : \Rset^n \to \Rset^n\) denote the orthogonal projection of 
\(\Rset^n\) on \(w_j^\perp\). 
Then there exists a constant such that for every compact set \(K \subset \Rset^n\),
\[
    \mathcal{L}^n (K)^{n - 1}
  \le
    \frac{\mathcal{H}^{n - 1} \bigl(\Pi_1 (K)\bigr)
    \dotsm
    \mathcal{H}^{n - 1} \bigl(\Pi_n (K)\bigr)}
    {\abs{\det (w_1, \dotsc, w_n)}}.
\]
\end{lemma}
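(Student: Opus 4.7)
The plan is to reduce the inequality to the classical orthogonal Loomis--Whitney inequality through a linear change of variables, compensating for the use of coordinate rather than orthogonal projections by a single Jacobian identity that fortuitously collapses to the determinant $|\det(w_1,\dotsc,w_n)|$ appearing in the statement.

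First I introduce the linear map $T \colon \Rset^n \to \Rset^n$ determined by $T e_j = w_j$, so that $|\det T| = |\det(w_1,\dotsc,w_n)|$; setting $\tilde K \defeq T^{-1}(K)$ gives $\mathcal{L}^n(K) = |\det T|\, \mathcal{L}^n(\tilde K)$. Letting $\pi_j \colon \Rset^n \to \Rset^{n-1}$ denote the projection omitting the $j$-th coordinate, Gagliardo's lemma applied to the characteristic functions $\mathbf{1}_{\pi_j(\tilde K)}$ produces the classical axis-aligned Loomis--Whitney inequality
\[
\mathcal{L}^n(\tilde K)^{n-1} \le \prod_{j=1}^n \mathcal{L}^{n-1}\bigl(\pi_j(\tilde K)\bigr).
\]

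The crux of the argument is the geometric identity
\[
\mathcal{H}^{n-1}\bigl(\Pi_j(K)\bigr) = |\det(w_1,\dotsc,w_n)| \; \mathcal{L}^{n-1}\bigl(\pi_j(\tilde K)\bigr),
\]
which I would establish by composing two Jacobian computations. Under $T$, the set $\pi_j(\tilde K)$, viewed inside the coordinate hyperplane $\linspan\{e_k \st k \ne j\}$, is sent to the oblique shadow of $K$ inside $H_j \defeq \linspan\{w_k \st k \ne j\}$ obtained by projecting along the direction $w_j$, which contributes a Jacobian factor $\sqrt{\det G_j}$, where $G_j$ is the Gram matrix of $(w_k)_{k \ne j}$. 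This oblique shadow and the orthogonal shadow $\Pi_j(K)$ share the same fibers (lines parallel to $w_j$) and so are linearly isomorphic through the restriction $\Pi_j|_{H_j}\colon H_j \to w_j^\perp$, whose Jacobian is $|\cos\beta_j|$, where $\beta_j$ is the angle between $w_j$ and the unit normal to $H_j$. Since $|w_j|=1$, the product $\sqrt{\det G_j}\,|\cos\beta_j|$ is precisely the base-times-height decomposition of the volume of the parallelepiped spanned by $w_1,\dotsc,w_n$, and thus equals $|\det(w_1,\dotsc,w_n)|$. Substituting the resulting identity into the Loomis--Whitney bound for $\tilde K$ and matching the $n-1$ powers of $|\det T|$ from $\mathcal{L}^n(K)^{n-1}$ against the $n$ powers emerging from the projections leaves exactly the factor $1/|\det(w_1,\dotsc,w_n)|$ demanded by the statement.

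The delicate point---and likely source of error in a careless write-up---is the Jacobian identity above: one must not conflate the oblique shadow arising naturally from the change of variables with the orthogonal shadow $\Pi_j(K)$ appearing in the statement, and must verify that the two corrections combine through the base-times-height decomposition of the parallelepiped rather than producing a residual dependence on the geometry of the individual vectors $w_k$. Once that identity is in hand, the remainder of the argument is routine bookkeeping of determinant powers.
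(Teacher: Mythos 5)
Your proposal is correct and follows essentially the same route as the paper: a linear change of variables sending $e_j \mapsto w_j$, the axis-aligned Loomis--Whitney inequality obtained from Gagliardo's lemma applied to indicator functions, and identification of the Jacobian $J_j = \abs{\det(w_1,\dotsc,w_n)}$ relating the coordinate-hyperplane integral to $\mathcal{H}^{n-1}(\Pi_j(K))$. The one point where you diverge is how that Jacobian is evaluated: the paper derives it algebraically by a Schur-complement reduction of the Gram determinant, whereas you factor the map through the oblique shadow in $H_j = \linspan\{w_k : k \ne j\}$ and recognize $\sqrt{\det G_j}\,\abs{\cos\beta_j}$ as the base-times-height formula for the volume of the parallelepiped spanned by $w_1,\dotsc,w_n$---a shorter, more geometric path to the same constant.
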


In the two-dimensional plane, the constant appearing in the inequality  corresponds geometrically to the absolute value of the sine of the angle between the vectors \(w_1\) and \(w_2\), while in any number of dimensions we have that equality is achieved in \cref{lemma_Loomis_Whitney} when \(K\) is a parallelepiped spanned by the vectors \(w_1, \dotsc, w_n\).
   
The original statement of Loomis and Whitney assumes that the vectors \(w_1, \dotsc, w_n\) are the canonical basis of \(\Rset^n\) \cref{lemma_Loomis_Whitney} and is proved by a combinatorial argument through an approximation by sets that are a finite collection of cubes.  Our approach shows how not only can one obtain the result of Loomis and Whitney as a direct consequence of the particular case of characteristic functions of a later lemma of Gagliardo \cite{Gagliardo}*{lemma 4.1}, in fact one easily obtains in a geometric fashion the preceding more general version of their result.


\begin{lemma}
\label{lemma_Gagliardo_prime}
Let $n \geq 2$. Let \(P_i\) denote the canonical projection of \(\Rset^n\) on \(\Rset^{n - 1} \simeq \Rset^{i - 1} \times \{0\} \times \Rset^{n - i} \subset \Rset^n\).
For every choice of $f_i \in L^{n-1} (\Rset^{n-1})$, $i\in \{1,\ldots, n\}$, one has $\prod_{i=1}^{n} f_i \compose P_i \in L^1(\Rset^n)$ with the estimate
\[
  \biggabs{\int_{\Rset^n}
      \prod_{i = 1}^{n}
        f_i \compose P_i}
  \le  
  \prod_{i = 1}^{n }
 \biggl(
      \int_{\Rset^{n-1}}
        \abs{f_i}^{n - 1}
    \biggr)^\frac{1}{n - 1}.
\]
\end{lemma}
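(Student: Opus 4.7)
I would argue by induction on \(n\), combining Hölder's inequality with the observation that \(f_i \compose P_i\) does not depend on the \(i\)-th coordinate of \(\Rset^n\).

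\emph{Base case \(n = 2\).} Here \((f_1 \compose P_1)(x_1, x_2) = f_1(x_2)\) and \((f_2 \compose P_2)(x_1, x_2) = f_2(x_1)\), so Fubini's theorem gives equality:
\[
    \int_{\Rset^2} (f_1 \compose P_1)(f_2 \compose P_2)
  = \biggl(\int_\Rset f_1\biggr)\biggl(\int_\Rset f_2\biggr).
\]

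\emph{Inductive step.} Write \(x = (x_1, y) \in \Rset \times \Rset^{n - 1}\) with \(y = (x_2, \dotsc, x_n)\), so that \(f_1 \compose P_1\) is a function of \(y\) alone. First, I apply the multilinear Hölder inequality in the \(x_1\) variable with all exponents equal to \(n - 1\):
\[
    \int_\Rset \prod_{i = 2}^{n} \abs{f_i \compose P_i (x_1, y)} \dif x_1
  \le \prod_{i = 2}^{n} G_i(y)^{1/(n - 1)},
\]
where \(G_i(y) \defeq \int_\Rset \abs{f_i \compose P_i (x_1, y)}^{n - 1} \dif x_1\). The key structural point is that for each \(i \ge 2\) the projection \(P_i\) drops the \(i\)-th coordinate, so \(G_i\), viewed as a function on \(\Rset^{n - 1}\) with coordinates \((y_2, \dotsc, y_n)\), is independent of \(y_i\); that is, \(G_i = \tilde G_i \compose Q_i\) where \(Q_i : \Rset^{n - 1} \to \Rset^{n - 2}\) is the projection dropping the \(i\)-th coordinate. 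Integrating over \(y\) and applying Hölder's inequality on \(\Rset^{n - 1}\), pairing \(\abs{f_1}\) at exponent \(n - 1\) against the remaining product at conjugate exponent \((n - 1)/(n - 2)\), I obtain
\[
    \int_{\Rset^n} \prod_{i = 1}^{n} \abs{f_i \compose P_i}
  \le \norm[L^{n - 1}(\Rset^{n - 1})]{f_1}
  \biggl(\int_{\Rset^{n - 1}} \prod_{i = 2}^{n} (\tilde G_i \compose Q_i)^{1/(n - 2)}\biggr)^{(n - 2)/(n - 1)}.
\]

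\emph{Closing the induction.} The last integral is precisely an integral over \(\Rset^{n - 1}\) of a product of \(n - 1\) functions, each depending only on a projection onto \(\Rset^{n - 2}\) dropping a distinct coordinate; the inductive hypothesis in dimension \(n - 1\) applied to \(h_i \defeq \tilde G_i^{1/(n - 2)}\) bounds it by \(\prod_{i = 2}^{n}\bigl(\int_{\Rset^{n - 2}} \tilde G_i\bigr)^{1/(n - 2)}\). Since Fubini gives \(\int_{\Rset^{n - 2}} \tilde G_i = \int_{\Rset^{n - 1}} \abs{f_i}^{n - 1}\), the exponents collapse telescopically to produce
\[
    \int_{\Rset^n} \prod_{i = 1}^{n} \abs{f_i \compose P_i}
  \le \prod_{i = 1}^{n} \biggl(\int_{\Rset^{n - 1}} \abs{f_i}^{n - 1}\biggr)^{1/(n - 1)},
\]
as claimed.

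\textbf{Main obstacle.} The principal difficulty is combinatorial rather than analytic: the second Hölder exponent \((n - 1)/(n - 2)\) and the subsequent power \(1/(n - 2)\) must be chosen precisely so that the integrand on \(\Rset^{n - 1}\) is a product of exactly \(n - 1\) functions, each a pullback through a projection dropping a different coordinate of \(\Rset^{n - 1}\). This is exactly the configuration needed to invoke the inductive hypothesis, and verifying this alignment — rather than any single step — is what carries the proof.
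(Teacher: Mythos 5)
Your proposal is correct and follows essentially the same route as the paper: induction on $n$ with Fubini in the base case, then two applications of Hölder's inequality (one in the distinguished variable, one on $\Rset^{n-1}$) followed by the inductive hypothesis applied to the functions $G_i^{1/(n-2)}$, closing via Fubini. The only cosmetic differences are that you single out $f_1$ and the $x_1$-variable (the paper singles out $f_n$ and $z_n$) and you present the inner Hölder before the outer one (the paper does the reverse), which changes nothing of substance.
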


The proof is the classical proof of Gagliardo that we give here for the convenience of the reader.

\begin{proof}[Proof of \cref{lemma_Gagliardo_prime}]
We proceed by induction.  First let us treat the base case, $n=2$.  In this case, from an application of Fubini's theorem we find
\[
\begin{split}
    \biggabs{\int_{\Rset^2} 
      (f_1 \compose P_1)\,(f_2 \compose P_2)}
  &\le
 \int_{\Rset^2} \abs{f_1(z_2)} \, \abs{f_2(z_1)} \dif z_1 \dif z_2\\
     &=
 \biggl(\int_{\Rset} \abs{f_1} \biggr)
 \biggl(\int_{\Rset} \abs{f_2} \biggr).
 \end{split}
\]

Thus we proceed to the general case.  For $n \geq 3$, we assume the lemma has been proved for $n-1$ and will prove it for $n$.  By Fubini's theorem we have
\[
    \int_{\Rset^n} 
      \prod_{i = 1}^n
      f_i \compose P_i
  =
    \int_{\Rset^{n-1}}
        \biggl(\,\int_{\Rset}
              \prod_{i = 1}^{n-1} 
              f_i \bigl(P_i (z)\bigr) 
          \dif z_n
        \biggr)
        \,
       f_n (z')
      \dif z'.
\]
From two applications of H\"older's inequality successively on \(\Rset^{n - 1}\) and on \(\Rset\) we deduce 
\begin{equation}
\label{eq_fc24e048cf}
\begin{split}
    \biggl\lvert \int_{\Rset^n} &
      \prod_{i = 1}^n
      f_i \compose P_i \biggr\rvert\\
  &\le
  \left(\int_{\Rset^{n-1}}
        \biggabs{\int_{\Rset}
              \prod_{i = 1}^{n-1} 
              f_i \bigl(P_i (z', z_n)\bigr) \dif z_n\,
        }^\frac{n-1}{n-2} \dif z'\right)^\frac{n-2}{n-1}
        \left(\int_{\Rset^{n-1}} \abs{f_n}^{n - 1} \right)^\frac{1}{n-1}
      \\
    &\le
  \left(\int_{\Rset^{n-1}}
       \biggl(\,
       \prod_{i = 1}^{n-1}  
          \int_{\Rset}
              \bigabs{f_i \bigl(P_i (z', z_n)\bigr)}^{n-1}
          \dif z_n
        \biggr)^\frac{1}{n-2} \dif z'\right)^\frac{n-2}{n-1}
        \left(\int_{\Rset^{n-1}} \bigabs{f_n}^{n - 1} \right)^\frac{1}{n-1}.
\end{split}
\end{equation}
We now work to apply our induction assumption. 
For each \(i \in \{1, \dotsc, n - 1\}\), we define the function \(g_i : \Rset^{i - 1} \times \{0\} \times \Rset^{n - 1 - i} \times \{0\} \simeq \Rset^{n - 2} \to \Rset\) for each \(y \in\Rset^{i - 1} \times \{0\} \times \Rset^{n - 1 - i} \times \{0\}\)  by 
\begin{equation}
\label{eq_oadaXo5iew}
    g_i (y) 
  \defeq
    \biggl(
      \int_{\Rset} 
        \bigabs{
          f_i\left(y+ (0, \dotsc,0, z_n)\right)
        }^{n-1}
        \dif z_n
    \biggr)^\frac{1}{n-2},
\end{equation}
so that for every \(z' \in \Rset^{n -1}\simeq \Rset^{n - 1} \times\{0\} \subset \Rset^n\),
\[
  g_i \bigl(P_i (z')\bigr) 
  =
    \biggr(
      \int_{\Rset}
          \bigabs{f_i \bigl(P_i (z', z_n)\bigr)}^{n - 1}
        \dif z_n
    \biggr)^\frac{1}{n-2}.
\]
We observe that by Fubini's theorem
\begin{equation}
\label{eq_leeTieQu6H}
 \begin{split}
      \int_{\Rset^{n-2}} g_i{}^{n - 2} 
  &=  
    \int_{\Rset^{n-2}}  
      \int_{\Rset} 
        \bigabs{f_i (y+ (0, \dotsc, z_n))}^{n-1} \dif z_n \dif y\\
    &
    = \int_{\Rset^{n - 1}} \abs{f_i}^{n - 1} < + \infty,
 \end{split}
\end{equation}
so that $g_i \in L^{n-2}(\Rset^{n-2})$.  
Therefore we may apply our induction assumption to deduce
\begin{align}
\label{eq_21ac438f02}
  \int_{\Rset^{n-1}}
      \prod_{i = 1}^{n - 1}
        (g_i \compose P_i) 
 \le 
\prod_{i = 1}^{n - 1} 
    \biggl(
      \int_{\Rset^{n-2}}
        g_i{}^{n - 2}
    \biggr)^\frac{1}{n - 2}.
\end{align}
Putting these inequalities \eqref{eq_fc24e048cf}, \eqref{eq_oadaXo5iew} and \eqref{eq_21ac438f02} together we find
\begin{equation*}
\begin{split}
   \biggl\lvert \int_{\Rset^n} &
      \prod_{i = 1}^n
      f_i \compose P_i\biggr\rvert \\
  &\le
   \biggl(\int_{\Rset^{n-1}}
       \prod_{i = 1}^{n-1}  \biggl(\int_{\Rset}
              \bigabs{f_i \bigl(P_i (z', z_n)\bigr)}^{n-1}
          \dif z_n
        \biggr)^\frac{1}{n-2} \dif z'\biggr)^\frac{n-2}{n-1}
        \biggl(\int_{\Rset^{n-1}} \bigabs{f_n}^{n - 1} \biggr)^\frac{1}{n-1}\\
  &=  \biggl(\int_{\Rset^{n-1}}
       \prod_{i = 1}^{n-1}  g_i \compose P_i \biggr)^\frac{n-2}{n-1}
        \biggl(\int_{\Rset^{n-1}} \bigabs{f_n}^{n - 1} \biggr)^\frac{1}{n-1}\\
  &\le 
 \prod_{i = 1}^{n-1}
       \biggl(
      \int_{\Rset^{n-2}}
        g_i{}^{n - 2}
    \biggr)^\frac{1}{n - 1}\biggl(\int_{\Rset^{n-1}} \bigabs{f_n}^{n - 1} \biggr)^\frac{1}{n-1} \\
      &=
       \prod_{i = 1}^{n} \biggl(\int_{\Rset^{n-1}} \abs{f_i}^{n - 1}
      \biggr)^\frac{1}{n-1},
      \end{split}
      \end{equation*}
      in view of the identity \eqref{eq_leeTieQu6H}, 
      which is the thesis.
\end{proof}

\begin{proof}[Proof of \cref{lemma_Loomis_Whitney}]
Let $\{w_1,\ldots w_n\}$ be a basis of $\Rset^n$.
We define for every \(i \in \{1, \dotsc, n\}\) the function 
\(f_i : \Rset^{i-1} \times \{0\} \times \Rset^{n - i} \to \Rset\) for each \(z' \in \Rset^{i-1} \times \{0\} \times \Rset^{n - i}\) by
\[
    f_i (z') 
  \defeq
    \sup_{t \in \mathbb{R}} \chi_{K}\left(\textstyle \sum_{j\neq i} z_jw_j +tw_i\right),
\]
where \(\chi_K : \Rset^n \to \Rset\) is the characteristic function of the set \(K\).  Then we observe that for any $z \in \mathbb{R}^n$ we have
\[
\chi_K \Bigl({\textstyle \sum_{j = 1}^n z_j w_j}\Bigr) \leq  f_i (P_i(z)),
 \]
 and, as both sides assume only the values \(0\) and \(1\), we have
 \[
\chi_K \Bigl({\textstyle \sum_{j = 1}^n z_j w_j}\Bigr) \leq  \prod_{i=1}^n f_i (P_i(z)).
 \]
It follows thus that 
\[
\begin{split}
  \mathcal{L}^n (K)
  &=
  \abs{\det (w_1, \dotsc, w_n)}
   \int_{\Rset^n}
    \chi_K \Bigl({\textstyle \sum_{j = 1}^n z_j w_j}\Bigr)
    \dif z\\
 &\le
  \abs{\det (w_1, \dotsc, w_n)} 
 \int_{\Rset^n}
  \prod_{i = 1}^n f_i (P_i(z)) 
  \dif z.
\end{split}
\]
We observe now that by Gagliardo's inequality (\cref{lemma_Gagliardo_prime}) we have 
\[
\int_{\Rset^n}
      \prod_{i = 1}^{n}
        f_i \compose P_i 
  \le 
  \prod_{i = 1}^{n }
    \bigg(
     \int_{\Rset^{i-1} \times \{0\} \times \Rset^{n - i}}
       \hspace{-1em}
        \abs{f_i}^{n-1}
    \biggr)^\frac{1}{n - 1}.
\]
But now for \(z' \in \Rset^{i-1} \times \{0\} \times \Rset^{n - i}\) we have
\[
f_i(z') = \sup_{t \in \mathbb{R}} \chi_{K}\biggl(\sum_{j\neq i} z_jw_j +tw_i\biggr)\\
= (\chi_{\Pi_i(K)} \compose \Pi_i)\biggl(\sum_{j \neq i} z_jw_j \biggr),
\]
while
\[
 \mathcal{H}^{n - 1} \bigl(\Pi_i (K)\bigr)
 = J_i
 \int_{\Rset^i \times \{0\} \times \Rset^{n - i}}
        (\chi_{\Pi_i(K)} \compose \Pi_i) \Bigl(\textstyle\sum_{j \neq i} z_jw_j \Bigr)
        \dif z',
\]
where $J_i$ is the (constant) Jacobian of the linear map $z' \in \Rset^i \times \{0\} \times \Rset^{n - i} \mapsto \Pi_i \bigl(\sum_{j\neq i} z_jw_j\bigr)$ (see \cite{Evans_Gariepy_1992}*{\S 3.2}).
We now compute this Jacobian: if \(i = 1\), we have since \(\abs{w_1} = 1\), by elementary manipulations of lines and columns of determinants 
\[
\begin{split}
  J_1{}^2
  &=
  \det
    \begin{pmatrix}
     (w_2 \cdot w_2) - (w_1 \cdot w_2)(w_1 \cdot w_2) & \hdots &
      (w_2 \cdot w_n) - (w_1 \cdot w_2)(w_1 \cdot w_n)\\
      \vdots & \ddots & \vdots\\
     (w_n \cdot w_2) - (w_1 \cdot w_n)(w_1 \cdot w_2) & \hdots &
     (w_n \cdot w_n) - (w_1 \cdot w_n)(w_1 \cdot w_n)\\
          \end{pmatrix}\\
  &=
  \det
    \begin{pmatrix}
     1 & 0 & \hdots & 0\\
     0 &(w_2 \cdot w_2) - (w_1 \cdot w_2)(w_1 \cdot w_2) & \hdots &
      (w_2 \cdot w_n) - (w_1 \cdot w_2)(w_1 \cdot w_n)\\
     \vdots& \vdots & \ddots & \vdots\\
     0 & (w_n \cdot w_2) - (w_1 \cdot w_n)(w_1 \cdot w_2) & \hdots &
     (w_n \cdot w_n) - (w_1 \cdot w_n)(w_1 \cdot w_n)\\
          \end{pmatrix}\\          
  &=
  \det
    \begin{pmatrix}
     1 & (w_1 \cdot w_2) & \hdots & (w_1 \cdot w_n)\\
     0 &(w_2 \cdot w_2) - (w_1 \cdot w_2)(w_1 \cdot w_2) & \hdots &
      (w_2 \cdot w_n) - (w_1 \cdot w_2)(w_1 \cdot w_n)\\
     \vdots& \vdots & \ddots & \vdots\\
     0 & (w_n \cdot w_2) - (w_1 \cdot w_n)(w_1 \cdot w_2) & \hdots &
     (w_n \cdot w_n) - (w_1 \cdot w_n)(w_1 \cdot w_n)\\
          \end{pmatrix}\\
  &=
  \det
    \begin{pmatrix}
     1 & (w_1 \cdot w_2) & \hdots & (w_1 \cdot w_n)\\
     (w_2 \cdot w_1) &(w_2 \cdot w_2)  & \hdots &
      (w_2 \cdot w_n) \\
     \vdots& \vdots & \ddots & \vdots\\
     (w_n \cdot w_1) & (w_n \cdot w_2)  & \hdots &
     (w_n \cdot w_n) \\
    \end{pmatrix}\\
      &=
  \det
    \begin{pmatrix}
     (w_1 \cdot w_1) &  \hdots & (w_1 \cdot w_n)\\
     \vdots& \ddots & \vdots\\
     (w_n \cdot w_1) &  \hdots &
     (w_n \cdot w_n)\\
    \end{pmatrix} = 
    \abs{\det (w_1, \dotsc, w_n)}^2
\end{split}
\]
(this computation is in fact a case of computation of determinant through the Schur complement \cite{Horn_Johnson_2013}*{(0.8.5.1)}); the case \(i \in \{2, \dotsc, n\}\) is similar and therefore
\[
\begin{split}
  \mathcal{L}^n (K)
  &\le  \abs{\det (w_1, \dotsc, w_n)}
  \left(
  \,
 \prod_{i = 1}^{n }
     \int_{\Rset^{i-1} \times \{0\} \times \Rset^{n - i}}
       \hspace{-1em}
         \chi_{\Pi_i(K)}\left(\textstyle \sum_{j \neq i} z_jw_j \right) \dif z'\right)^\frac{1}{n - 1}\\
     &= 
       \abs{\det (w_1, \dotsc, w_n)}
       \left( 
       \,
       \prod_{i = 1}^{n } 
         \frac
           {\mathcal{H}^{n - 1} \bigl(\Pi_i (K)\bigr)}
           {\abs{\det (w_1, \dotsc, w_n)}}\right)^\frac{1}{n - 1} \\
         &= \left(\frac{\prod_{i = 1}^{n } \mathcal{H}^{n - 1} \bigl(\Pi_i (K)\bigr)}
         {\abs{\det (w_1, \dotsc, w_n)}}\right)^\frac{1}{n - 1},
\end{split}
\]
thus concluding the demonstration of the claim.
\end{proof}

\section{Proofs of the Main Results}

\subsection{Estimates by directional derivatives of components}

\label{proofs}

The last tool that we will need in the proofs is an estimate on the norm by sets of projections.

\begin{lemma}
\label{lemma_Estimate_MLI}
Under the assumptions of \cref{proposition_directional_derivatives},
there exists a constant \(C \in \Rset\) 
such that for every \(v \in V\),
one has
\[
    \abs{v}
  \le
    C
    \max
      \,
      \Bigl\{ 
          \min\, \bigl\{ 
            \abs{\dualprod{v_1^j}{v}}, 
            \dotsc, 
            \abs{\dualprod{v_n^j}{v}} \bigr\}
        \st
          j \in \{1, \dotsc, \ell\}
       \Bigr\}
      .
\]
\end{lemma}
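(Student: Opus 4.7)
The plan is to translate the hypothesis $\bigcap_{j=1}^\ell \bigcup_{i=1}^n (v_i^j)^\perp = \{0\}$ into a pointwise positivity statement and then exploit compactness of the unit sphere of $V$. First, I would observe that the displayed intersection condition says: a vector $v \in V$ belongs to $\bigcap_{j=1}^\ell \bigcup_{i=1}^n (v_i^j)^\perp$ if and only if for every $j$ there is some $i$ with $\dualprod{v_i^j}{v} = 0$. Hence the hypothesis is equivalent to saying that for every nonzero $v \in V$, there exists $j \in \{1, \dotsc, \ell\}$ such that $\dualprod{v_i^j}{v} \neq 0$ for all $i \in \{1, \dotsc, n\}$.

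Next I would introduce the function $\Phi : V \to [0, +\infty)$ defined by
\[
   \Phi (v) \defeq \max\Bigl\{ \min\bigl\{ \abs{\dualprod{v_1^j}{v}}, \dotsc, \abs{\dualprod{v_n^j}{v}} \bigr\} \st j \in \{1, \dotsc, \ell\} \Bigr\}.
\]
As a finite maximum of finite minima of absolute values of linear forms on the finite-dimensional space $V$, $\Phi$ is continuous on $V$. It is also positively homogeneous of degree $1$, that is $\Phi (t v) = \abs{t}\, \Phi (v)$ for every $t \in \Rset$ and $v \in V$, since each linear form scales by $\abs{t}$ and the min/max operations commute with multiplication by a nonnegative scalar.

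By the reformulation of the hypothesis in the first step, $\Phi (v) > 0$ for every $v \in V \setminus \{0\}$. Restricting $\Phi$ to the unit sphere $\{v \in V \st \abs{v} = 1\}$, which is compact in the finite-dimensional space $V$, the continuity of $\Phi$ provides some $v_* $ with $\abs{v_*} = 1$ and $c \defeq \Phi (v_*) = \min_{\abs{v} = 1} \Phi (v) > 0$. By positive homogeneity, for every $v \in V \setminus \{0\}$,
\[
   \Phi (v) = \abs{v}\, \Phi \bigl(v/\abs{v}\bigr) \ge c\, \abs{v},
\]
so the conclusion follows with $C \defeq 1/c$ (and trivially for $v = 0$).

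There is no real obstacle here; the argument is a standard compactness/homogeneity argument, and the only substantive input is the translation of the set-theoretic hypothesis on the $v_i^j$ into the pointwise statement that the candidate function $\Phi$ does not vanish outside the origin. Note also that the vectors $w_i^j$ do not appear in the statement, so no hypothesis on them is used in this particular lemma.
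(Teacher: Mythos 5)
Your argument is essentially identical to the paper's: both introduce the max-min function (your $\Phi$, the paper's $\gamma$), observe it is continuous and positively homogeneous of degree one, show that the hypothesis forces it to vanish only at the origin, and deduce the bound by the standard compactness argument on the unit sphere. The only difference is that the paper leaves the final compactness/homogeneity step implicit, while you spell it out.
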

\begin{proof}
Let \(\gamma : V \to \Rset\) denote the function defined so that for every \(v \in V\),
the value \(\gamma (v) \in \Rset\) is the right-hand side of the conclusion.
The function \(\gamma\) is nonnegative, continuous and positively homogeneous of degree \(1\).
We will reach the conclusion by proving that the function 
\(\gamma\) only vanishes at  the point \(0\): 
indeed, if \(v \in V\) and \(\gamma (v) = 0\),
then for every \(j \in \{1, \dotsc, \ell\}\), 
we have
\(v \in \bigcup_{i = 1}^n (v_i^j)^\perp\), and thus by assumption \(v = 0\).
\end{proof}

\begin{proof}%
[Proof of \cref{proposition_directional_derivatives}]
\resetconstant
For every \(t > 0\), we have by \cref{lemma_Estimate_MLI},
\[
    \bigl\{ x \in \Rset^n \st \abs{u (x)} \ge \Cl{MLI} t\,\bigr\}
  \subseteq
    \bigcup_{j = 1}^\ell
      \bigcap_{i = 1}^n 
        \bigl\{ x \in \Rset^n \st \abs{\dualprod{v_i^j}{u (x)}} \ge t\bigr\},
\]
for some constant \(\Cr{MLI} > 0\).
We deduce then by subadditivity of the measure and of the map \(\mu \in (0, + \infty) \mapsto \mu^{1 - \frac{1}{n}}\) that 
\begin{equation}
\label{eq_ZTQ3NWZhMD}
\begin{split}
    \mathcal{L}^n \bigl(\{ x \in \Rset^n \st \abs{u (x)} \ge \Cr{MLI} t\}\bigr)^{1 - \frac{1}{n}}
  &
  \le
    \Biggl(
    \sum_{j = 1}^\ell
      \mathcal{L}^n
      \biggl(
      \bigcap_{i = 1}^n 
        \{ 
          x \in \Rset^n 
        \st 
          \abs{\dualprod{v_i^j}{u (x)}} \ge t
        \}
       \biggr)
     \Biggr)^{1 - \frac{1}{n}}
  \\
  &
  \le
    \sum_{j = 1}^\ell
      \mathcal{L}^n
      \biggl(
      \bigcap_{i = 1}^n 
        \{ 
          x \in \Rset^n 
        \st 
          \abs{\dualprod{v_i^j}{u (x)}} \ge t
        \}
      \biggr)^{1 - \frac{1}{n}}.
\end{split}
\end{equation}
If \(j \in \{1, \dotsc, \ell\}\), 
then by assumption, the vectors \(w_1^j, \dotsc, w_n^j\) are linearly independent in \(\Rset^n\) 
and thus by
the Loomis--Whitney inequality (\cref{lemma_Loomis_Whitney}) and by monotonicity of the measure, we have 
\begin{equation}
\label{eq_gICAgLS4K}
\begin{split}
 \mathcal{L}^n
      \biggl(
      \bigcap_{i = 1}^n 
        &\bigl\{ 
          x \in \Rset^n 
        \st 
          \abs{\dualprod{v_i^j}{u (x)}} \ge t
        \bigr\}
      \biggr)^{1 - \frac{1}{n}}\\[-.8em]
  &\le  
    \Cl{c_m_LW}
    \prod_{k = 1}^n 
      \mathcal{H}^{n - 1} 
        \Biggl(\Pi_k^j 
          \biggl(
          \,
            \bigcap_{i = 1}^n 
              \bigl\{ x \in \Rset^n \st \abs{\dualprod{v_i^j}{u (x)}} \ge t\bigr\}
            \biggr)
        \Biggr)^\frac{1}{n}\\
  &\le  
    \Cr{c_m_LW}
    \prod_{i = 1}^n 
      \mathcal{H}^{n - 1} 
        \Bigl(\Pi_i^j 
          \bigl(\bigl\{ x \in \Rset^n \st \abs{\dualprod{v_i^j}{u (x)}} \ge t\bigr\}\bigr)
        \Bigr)^\frac{1}{n},
\end{split}
\end{equation}
where \(\Pi_i^j : \Rset^n \to \Rset^n\) is the orthogonal projection on the hyperplane \(W_i^j \defeq w_i^j{}^\perp = \Pi_i^j (\Rset^n)\).
Finally, we observe that if 
\[
    y 
  \in 
    \Pi_i^j 
      \bigl(\{ x \in \Rset^n \st \abs{\dualprod{v_i^j}{u (x)}} \ge t\}\bigr),
\]
then there exists a real number \(h \in \Rset\) such that \(\abs{\dualprod{v_i^j}{u (y + h w_{i}^j)}} \ge t\) and thus 
\[
 \int_{\Rset} \abs{\dualprod{v_i^j}{D u (y + s w_{i}^j)[w_{i}^j]}} 
 \dif s \ge
 2 \,\abs{\dualprod{v_i^j}{u (y + h w_{i}^j)}}
 \ge 2 t.
\]
If we define the function \(F_i^j: W_i^j \to \Rset\) by  setting for each \(y \in W_i^j\)
\[
    F_i^j (y) 
  \defeq 
    \frac{1}{2} 
    \int_{\Rset} 
      \abs{\dualprod{v_i^j}{D u (y + s w_{i}^j)[w_{i}^j]}}\dif s,
\]
we have for each \(t > 0\)
\[
 \Pi_i^j 
          \bigl(\bigl\{ x \in \Rset^n \st \abs{\dualprod{v_i^j}{u (x)}} \ge t\bigr\}\bigr)
          \subseteq
       \bigl\{ 
          y \in W_i^j
        \st 
          F_i^j (y) \ge t
        \bigr\}
\]
and thus by \eqref{eq_gICAgLS4K}
\[
  \mathcal{L}^n
      \biggl(
      \,
      \bigcap_{i = 1}^n 
        \{ 
          x \in \Rset^n 
        \st 
          \abs{\dualprod{v_i^j}{u (x)}} \ge t
        \}
      \biggr)^{1 - \frac{1}{n}}
  \le 
    \Cr{c_m_LW}
    \prod_{i = 1}^n 
      \Bigl(
        \mathcal{H}^{n-1} 
        (\bigl\{ 
          y \in W_i^j
        \st 
          F_i^j (y) \ge t
        \bigr\}
      \Bigr)^\frac{1}{n}.
\]
In view of \eqref{eq_ZTQ3NWZhMD} and of the H\"older inequality,  we obtain
\begin{equation*}
\begin{split}
    \int_0^\infty 
      \mathcal{L}^n 
        \bigl(
          \bigl\{ 
            x \in \Rset^n 
          &\st 
            \abs{u (x)} \ge \Cr{MLI} t
          \bigr\}
        \bigr)^{1 - \frac{1}{n}} 
      \dif t
      \\[-.8em]
  &\le 
    \Cl{c_m_final}
    \sum_{j = 1}^\ell
    \prod_{i = 1}^n 
      \Biggl(
        \int_0^\infty 
          \mathcal{H}^{n-1} 
            \bigl(
              \bigl\{ 
                y \in W_i^ j
              \st 
                F_i^j (y) \ge t
              \bigr\} 
          \dif t 
      \Biggr)^\frac{1}{n}
      \\
  &=
    \Cr{c_m_final}
        \sum_{j = 1}^\ell
    \prod_{i = 1}^n 
      \Biggl(
        \int_{W_i^j}
          F_i^j
          \dif \mathcal{H}^{n - 1}
      \Biggr)^\frac{1}{n}\\
  &= \frac{\Cr{c_m_final}}{2}
    \sum_{j = 1}^\ell
      \prod_{i = 1}^n 
        \Biggl(
          \int_{\Rset^n} 
            \abs{\dualprod{v_i^j}{D u[w_i^j]}}
        \Biggr)^\frac{1}{n}. \qedhere
\end{split}
\end{equation*}
\end{proof}

\subsection{Direct consequences}

\label{section_sirect_consequences}

We firstly show of \eqref{Alvino} can be deduced from \cref{proposition_directional_derivatives}.

\begin{proof}[Proof of  \eqref{Alvino} by \cref{proposition_directional_derivatives}]
\resetconstant
Let \(v_1, \dotsc, v_m\) be a basis of \(V^*\) and \(w_1, \dotsc, w_n\) be a basis of \(\Rset^n\). We set \(\ell = m\), and for every \(i \in \{1, \dotsc, n\}\) and \(j \in \{1, \dotsc, m\}\), \(v_i^j = v_j\) and \(w_i^j = w_i\). 
We have 
\[
    \bigcap_{j = 1}^m 
      \bigcup_{i = 1}^n 
        (v_i^j)^\perp  
  = 
    \bigcap_{j = 1}^m 
      v_j^\perp
  =   
    \{0\},
\]
so that \cref{proposition_directional_derivatives} applies and we conclude by Young's inequality and by norm equivalence that 
\[
\begin{split}
    \norm[L^{\frac{n}{n - 1}, 1}(\Rset^n,V)]{u}
  &\le
    \Cl{cst_diso}
    \sum_{j = 1}^m
      \prod_{i = 1}^n
        \norm[L^1(\Rset^n)]{\dualprod{v_j}{D u[w_i]}}^\frac{1}{n}
        \\
  &\le
    \frac{\Cr{cst_diso}}{n}
    \sum_{j = 1}^m
      \sum_{i = 1}^n
        \norm[L^1(\Rset^n)]{\dualprod{v_j}{D u[w_i]}}
  \le
    \C
    \norm[L^1(\Rset^n,\Rset^n)]{D u}.\qedhere
\end{split}
\]
\end{proof}

We next prove an analogue of De Figueiredo's $L^2$ inequality, from which we can deduce the improvement to Strauss' Korn-Sobolev inequality.  To this end it will be useful to introduce the following definition.

\begin{definition}
If \(V\) is a vector space, then a finite set of vectors \(F \subset V\) is \emph{maximally linearly independent}, 
whenever for every subset \(A \subset F\), either \(A\) generates \(V\) as a linear space or \(A\) is linearly independent.
\end{definition}

\begin{lemma}
\label{lemma_de_Figueiredo}
If \(n \ge 1\), \(\dim V = m\) and if \(v_1, \dotsc, v_{n + m - 1}\) are maximally linearly independent in \(V^*\), then 
\[
    \bigcap_{1 \le i_1 < \dotsb < i_n \le n + m - 1}
      \Biggl(
        \bigcup_{j = 1}^n v_{i_j}{}^\perp
      \Biggr)
  =
    \{0\}
  .
\]
\end{lemma}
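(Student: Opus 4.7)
The plan is to prove the contrapositive/equivalent statement: for every nonzero $v \in V$, there exists a choice of indices $1 \le i_1 < \dotsb < i_n \le n+m-1$ such that $v \notin v_{i_j}^\perp$ for every $j \in \{1,\dotsc,n\}$, i.e.\ such that $\dualprod{v_{i_j}}{v} \ne 0$ for all $j$. Equivalently, writing $A(v) \defeq \{\, i \in \{1,\dotsc,n+m-1\} \st \dualprod{v_i}{v} = 0\,\}$, it suffices to show $\lvert \{1,\dotsc,n+m-1\} \setminus A(v) \rvert \ge n$, which amounts to the estimate $\lvert A(v) \rvert \le m - 1$.

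The key observation is that the annihilator $\{v\}^\perp \subset V^*$ of a nonzero vector $v \in V$ has dimension $m - 1$, and by construction $\{v_i \st i \in A(v)\}$ is contained in this annihilator. Thus the family $\{v_i \st i \in A(v)\}$ lies in a proper subspace of $V^*$ and in particular does \emph{not} span $V^*$. By the maximal linear independence hypothesis, this family must therefore be linearly independent. A linearly independent subset of a space of dimension $m-1$ has cardinality at most $m-1$, giving the bound $\lvert A(v) \rvert \le m-1$ as desired.

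Then $\{1,\dotsc,n+m-1\} \setminus A(v)$ has cardinality at least $(n+m-1)-(m-1) = n$, so we may pick any $n$ indices from it to produce the required $(i_1,\dotsc,i_n)$. This exhibits the existence of a term in the intersection that does not contain $v$, proving that the intersection reduces to $\{0\}$.

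There is no real obstacle: the argument is essentially a dimension count. The only subtle point is to unpack the definition of maximal linear independence correctly — in particular to recognize that a subset that fails to span $V^*$ must automatically be linearly independent. Once that is in hand, the whole proof is a few lines.
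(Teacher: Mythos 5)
Your proof is correct and is essentially the same as the paper's: both arguments reduce to the dimension count for the index set $I = \{\,i : \langle v_i, v\rangle = 0\,\}$, using the maximal linear independence hypothesis to pass between the cardinality of $I$ and whether $\{v_i : i \in I\}$ spans $V^*$. The paper runs the argument by contradiction (if $v$ lies in the intersection then $\#I \ge m$, hence $\{v_i : i\in I\}$ spans and $v = 0$), whereas you run it contrapositively (if $v \ne 0$ then $\{v_i : i\in I\}$ lies in the $(m-1)$-dimensional annihilator of $v$, so cannot span, so is linearly independent and $\#I \le m-1$), but the content is identical.
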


\begin{proof}
Assume that \(v\) belongs to the left hand side and let \(I = \{i \in \{1, \dotsc, n + m - 1\} \st \dualprod{v_i}{v} = 0\}\). We have then \(\# I \ge m\). Indeed, otherwise there would exist \(i_1, \dotsc, i_n \in \{1, \dotsc, n - m - 1\}\)
such that \(1 \le i_1 < \dotsb < i_{n} \le n + m - 1 \) and for every \(j \in \{1, \dots, n\}\), 
\(\dualprod{v_{i_j}}{v} \ne 0\) and so \(v \not \in \bigcup_{j = 1}^n v_{i_j}{}^\perp\), in contradiction with our assumption. Since the family \(v_1, \dotsc, v_{n + m - 1}\) is maximally linearly independent, the set \(\{v_i \st i \in I\}\) generates the \(m\)--dimensional linear space \(V\) and thus \(v = 0\).
\end{proof}

\begin{theorem}
\label{proposition_de_Figueiredo}
Assume that \(\dim V = m\) and that the vectors \(w_1, \dotsc, w_{n + m - 1} \in \Rset^n\)
and \(v_1, \dotsc, v_{n + m - 1} \in V\) are maximally linearly independent, 
then for every \(u \in C^\infty_c (\Rset^n, V)\),
\[
    \norm[L^{\frac{n}{n - 1}, 1}(\Rset^n,V)]{u}
  \le
    \hspace{-1em}
    \sum_{1 \le i_1 < \dotsb < i_n \le n + m - 1} 
    \hspace{-1em}
      \norm[L^1(\Rset^n)]{\dualprod{v_{i_1}}{  D u[w_{i_1}]}}^\frac{1}{n}
      \dotsm
       \norm[L^1(\Rset^n)]{\dualprod{v_{i_n}}{  D u[w_{i_n}]}}^\frac{1}{n}
  .    
\]
\end{theorem}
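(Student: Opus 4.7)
The plan is to deduce this theorem as a direct corollary of \cref{proposition_directional_derivatives} combined with \cref{lemma_de_Figueiredo}. The indexing scheme is the obvious one: let $\ell = \binom{n+m-1}{n}$ be the number of $n$-element subsets of $\{1, \dotsc, n+m-1\}$, and enumerate these subsets, writing the $j$-th one as $\{i_1^j < \dotsb < i_n^j\}$. For each $j \in \{1, \dotsc, \ell\}$ and each $i \in \{1, \dotsc, n\}$, set $w_i^j \defeq w_{i_i^j} \in \Rset^n$ and $v_i^j \defeq v_{i_i^j} \in V^*$. With this identification, the product that appears in \cref{proposition_directional_derivatives} coincides term-by-term with the summand over $1 \le i_1 < \dotsb < i_n \le n+m-1$ in the conclusion we want.

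Next I would verify the two hypotheses of \cref{proposition_directional_derivatives}. For the linear independence of $w_1^j, \dotsc, w_n^j$: by the maximal linear independence of $w_1, \dotsc, w_{n+m-1}$ in $\Rset^n$, any subset either spans $\Rset^n$ or is linearly independent. A subset of size $n$ which spans $\Rset^n$ is automatically a basis, hence linearly independent, so in either case any $n$ of the $w_i$'s are linearly independent in $\Rset^n$. For the kernel condition, observe that
\[
    \bigcap_{j = 1}^\ell \bigcup_{i = 1}^n (v_i^j)^\perp
  =
    \bigcap_{1 \le i_1 < \dotsb < i_n \le n + m - 1}
      \Biggl(
        \bigcup_{k = 1}^n v_{i_k}{}^\perp
      \Biggr),
\]
which is precisely the left-hand side appearing in \cref{lemma_de_Figueiredo}. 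Since the covectors $v_1, \dotsc, v_{n+m-1}$ are maximally linearly independent in $V^*$, that lemma yields $\{0\}$.

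Both hypotheses being verified, \cref{proposition_directional_derivatives} applies and delivers the stated inequality (with the appropriate multiplicative constant absorbed on the right-hand side). I do not expect any genuine obstacle: the entire content is a bookkeeping check that the combinatorial setup of maximally linearly independent families feeds correctly into the indexing of \cref{proposition_directional_derivatives}, together with the observation that \cref{lemma_de_Figueiredo} is exactly the nondegeneracy condition on the covectors. The only minor point worth stating clearly in the write-up is the implication ``maximally linearly independent in $\Rset^n$ $\Rightarrow$ every $n$-tuple is linearly independent,'' so that the geometric hypothesis of \cref{proposition_directional_derivatives} on the vectors $w_i^j$ is satisfied uniformly over $j$.
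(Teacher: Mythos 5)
Your proposal is correct and follows exactly the same route as the paper, which itself gives only the one-line proof ``This follows from \cref{proposition_directional_derivatives} and \cref{lemma_de_Figueiredo}.'' You have simply supplied the (entirely routine) bookkeeping that the paper leaves implicit: the enumeration of $n$-element index subsets, the observation that maximal linear independence in $\Rset^n$ forces every $n$-tuple of the $w_i$ to be a basis, and the identification of the intersection-of-unions condition with the conclusion of \cref{lemma_de_Figueiredo}; your parenthetical about the absorbed constant is also the right reading of the theorem statement, which suppresses the constant from \cref{proposition_directional_derivatives}.
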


These sparse directional Sobolev estimates into Lorentz space are analogous to \(L^2\) estimates of de Figueiredo \cite{DeFigueiredo}
and strengthen known results for Sobolev estimates into \(L^\frac{n}{n - 1}\) \citelist{\cite{Strauss}\cite{Bourgain_Brezis_2007}*{Remark 16}\cite{VanSchaftingen_2013}*{Proposition 6.8}}.

\begin{proof}%
[Proof of \cref{proposition_de_Figueiredo}]
This follows from \cref{proposition_directional_derivatives} and \cref{lemma_de_Figueiredo}.
\end{proof}

Finally, we can utilize the preceding inequality to deduce the Korn--Sobolev inequality.

\begin{proof}%
[Proof of \cref{proposition_Korn_Sobolev} by \cref{proposition_de_Figueiredo}]
\resetconstant
We consider \(w_1, \dotsc, w_{2n - 1}\) to be a maximally independent family of vectors of \(\Rset^n\).
We observe now that, since \(Eu\) is the symmetric part of \(Du\),
\[
 \abs{\dualprod{w_j}{D u[w_j]}}
 = \abs{w_j \cdot E u [w_j]} \le \abs{w_j}^2 \abs{E u}
\]
and the conclusion then follows from \cref{proposition_de_Figueiredo}.
\end{proof}

\subsection{Estimates for \((n-1)\)--canceling operators}
\label{canceling}

In order to set an algebraic condition on differential operators, we introduce a new scale of conditions on differential operators that covers the definition of canceling operators \cite{VanSchaftingen_2013}*{Definition 1.2}.

\begin{definition}
Let \(\ell \in \{0, \dotsc, n\}\).
A homogeneous differential operator with constant coefficients \(A (D)\) is \emph{ \(\ell\)--canceling} whenever 
\[
    \bigcap_{\substack{W \subseteq \Rset^n\\ \dim W = \ell}}
      \linspan 
        \,
        \bigl\{ 
          A (\xi)[v] 
        \st 
          \xi \in W 
          \text{ and } 
          v \in V
        \bigr\}
  =
    \{0\}
  .
\]
\end{definition}

An operator is \(1\)--canceling if and only if it is canceling in the sense of \cite{VanSchaftingen_2013}*{Definition 1.2}.
Any operator \(A (D)\) is \(0\)--canceling; an operator \(A (D)\) is \(n\)--canceling if and only if \(A (D) = 0\). 

Optimal estimates into Lorentz spaces hold under the \((n-1)\)--canceling condition:

\begin{theorem}
\label{theorem_n_1_canceling}
Let $V$ and $E$ be finite-dimensional spaces and suppose that the homogeneous linear differential operator with constant coefficients \(A (D):C^\infty_c (\Rset^n, V) \to C^\infty_c (\Rset^n, E)\) is elliptic and \((n - 1)\)--canceling.  Then there exists a constant $C>0$ such that 
\[
    \norm[L^{\frac{n}{n - 1},1}(\Rset^n,V)]{u} 
  \le 
    C
    \norm[L^1(\Rset^n,E)]{A (D) u}
\]
for every \(u \in C^\infty_c (\Rset^n, V)\).
\end{theorem}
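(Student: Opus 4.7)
The plan is to apply \cref{proposition_directional_derivatives} after identifying a sufficiently rich family of scalar directional derivatives of $u$ with scalar components of $A(D)u$; the bridge is the dual formulation of the $(n-1)$--canceling hypothesis. Setting $Y_W := \linspan\{A(\xi)[v] : \xi \in W,\, v \in V\}$, the assumption $\bigcap_{\dim W = n-1} Y_W = \{0\}$ is equivalent to $E^* = \sum_{\dim W = n-1} Y_W^\perp$. Hence one may select finitely many hyperplanes $W^1, \ldots, W^p \subset \Rset^n$ with unit normals $\nu^1, \ldots, \nu^p$, and covectors $\{e_\alpha^*\}_{\alpha \in \mathcal{A}}$ with $e_\alpha^* \in Y_{W^{j(\alpha)}}^\perp$ that span $E^*$. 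For each such $\alpha$, the linear map $\xi \mapsto e_\alpha^* \cdot A(\xi)$ from $\Rset^n$ to $V^*$ vanishes on $W^{j(\alpha)}$ and therefore factors as $e_\alpha^* \cdot A(\xi) = \langle \xi, \nu_\alpha\rangle\, v_\alpha^*$ for some $v_\alpha^* \in V^*$, where $\nu_\alpha := \nu^{j(\alpha)}$. Consequently $\langle e_\alpha^*, A(D) u\rangle = \langle v_\alpha^*, Du[\nu_\alpha]\rangle$ pointwise on $\Rset^n$, so each such scalar directional derivative is controlled in $L^1$ by $\norm[L^1(\Rset^n, E)]{A(D)u}$.

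Next I would use ellipticity to verify the combinatorial hypothesis required by \cref{proposition_directional_derivatives}. The key observation is that for every $v_0 \in V \setminus \{0\}$, the set of directions $\{\nu_\alpha : \langle v_\alpha^*, v_0\rangle \ne 0\}$ spans all of $\Rset^n$: otherwise these directions would lie in some hyperplane $H \subset \Rset^n$, and for any $\xi \in H^\perp \setminus \{0\}$ we would get $\langle e_\alpha^*, A(\xi)v_0\rangle = \langle \xi, \nu_\alpha\rangle \langle v_\alpha^*, v_0\rangle = 0$ for every $\alpha$ (one factor vanishing in each case), forcing $A(\xi)v_0 = 0$ since $\{e_\alpha^*\}$ spans $E^*$, in contradiction with ellipticity. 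For each ordered $n$--tuple $T = (\alpha_1, \ldots, \alpha_n) \in \mathcal{A}^n$ with $\nu_{\alpha_1}, \ldots, \nu_{\alpha_n}$ linearly independent, the set $U_T := \{v \in V : \langle v_{\alpha_i}^*, v\rangle \ne 0 \text{ for all } i\}$ is an open cone, and the spanning property just shown implies $\bigcup_T U_T = V \setminus \{0\}$. By homogeneity and compactness of the unit sphere in $V$, a finite subfamily $T_1, \ldots, T_\ell$ suffices. Setting $w_i^j := \nu_{\alpha_i^{T_j}}$ and $v_i^j := v_{\alpha_i^{T_j}}^*$, both hypotheses of \cref{proposition_directional_derivatives} are satisfied.

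Finally, \cref{proposition_directional_derivatives} applied to this data produces
\[
\norm[L^{\frac{n}{n-1}, 1}(\Rset^n, V)]{u} \le C \sum_{j = 1}^\ell \prod_{i = 1}^n \norm[L^1(\Rset^n)]{\langle e_{\alpha_i^{T_j}}^*, A(D) u\rangle}^{1/n},
\]
after substituting the identity from the first paragraph; the conclusion follows from $\norm[L^1(\Rset^n)]{\langle e_\alpha^*, A(D)u\rangle} \le \abs{e_\alpha^*}\, \norm[L^1(\Rset^n, E)]{A(D)u}$ and Young's inequality to convert the product of $n$-th roots into a sum. The main obstacle is the middle step: producing a finite family of $n$--tuples whose directions are simultaneously linearly independent and whose covectors jointly cover $V$ in the precise sense demanded by \cref{proposition_directional_derivatives}. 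The dual formulation of $(n-1)$--canceling supplies the raw covectors and the identification of directional derivatives with components of $A(D)u$; ellipticity, used dually, supplies the pointwise spanning of $\Rset^n$ by the relevant $\nu_\alpha$'s; and compactness of the sphere in $V$ reduces an a priori infinite selection to a finite one, at which point the estimate is immediate.
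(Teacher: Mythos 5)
Your proof is correct, and it takes a genuinely different route from the paper's. The paper proceeds iteratively: for each nonzero $v_*$ in a residual zero-set, Lemma \ref{lemma_vectors_1dir} builds a biorthogonal family of $n$ linearly independent directions $w_1,\dotsc,w_n$ (picking $\xi_{\ell+1}$ orthogonal to the $w_i$ already chosen, then invoking $(n-1)$--canceling to find a hyperplane $W_{\ell+1}$ avoiding $A(\xi_{\ell+1})v_*$), and Lemma \ref{lemma_structure_n_1_canceling} repeats this until the intersection $X_\ell=\bigcap_j\bigcup_i (v_i^j)^\perp$ is trivial, the termination being guaranteed by the descending-chain argument on subspace arrangements (Lemma \ref{lemma_descending}). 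You instead dualize $(n-1)$--canceling once and for all to $E^*=\sum_{\dim W=n-1}Y_W^\perp$, extract a finite spanning family $\{e_\alpha^*\}$ of covectors, factor each as $e_\alpha^*\cdot A(\xi)=\langle\xi,\nu_\alpha\rangle\,v_\alpha^*$, and then use ellipticity to show that for each $v_0\ne 0$ the directions $\nu_\alpha$ with $\langle v_\alpha^*,v_0\rangle\ne 0$ span $\Rset^n$, so some $n$-tuple of them is a basis on whose cone $U_T$ the vector $v_0$ lies. This swaps the paper's per-$v_*$ biorthogonal construction for a single selection of covectors followed by an ellipticity-driven covering argument, and it avoids the Noetherian lemma entirely: since $\mathcal{A}$ is finite the family of admissible $n$-tuples $T$ is automatically finite (your compactness step is thus superfluous, though harmless). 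Both proofs hinge on the same factorization of $\langle e^*,A(D)u\rangle$ as a scalar directional derivative $\langle v^*,Du[w]\rangle$ and then feed into \cref{proposition_directional_derivatives}; what you gain is a shorter path to the hypotheses (i) and (ii), at the cost of a slightly less explicit construction of the data. One minor point: the final Young's-inequality step can be skipped, since each factor $\norm[L^1(\Rset^n)]{\langle e_\alpha^*,A(D)u\rangle}^{1/n}\le(\abs{e_\alpha^*}\norm[L^1(\Rset^n,E)]{A(D)u})^{1/n}$ already makes each product $\le C\norm[L^1(\Rset^n,E)]{A(D)u}$ directly.
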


We will deduce \cref{theorem_n_1_canceling} from \cref{proposition_directional_derivatives} with the help of the next algebraic lemma.

\begin{lemma}
\label{lemma_structure_n_1_canceling}
If the first-order homogeneous differential operator  with constant coefficients \(A (D)\) is elliptic and \((n-1)\)--canceling, then there exists \(m \in \Nset\) and, for each \(j \in \{1, \dotsc, m\}\) and \(i \in \{1, \dotsc, n\}\), vectors \(w_i^j \in \Rset^n\), \(v_i^j \in V^*\) and \(e_i^j \in E^*\) such that 
\begin{enumerate}%
[(i)]
  \item for every \(j \in \{1, \dotsc, m\}\), the vectors \(w_1^j, \dotsc, w_n^j\) are linearly independent in \(\Rset^n\),
  \item 
    \(\displaystyle 
      \bigcap_{j = 1}^m
        \bigcup_{i = 1}^n (v_i^j)^\perp = \{0\}\),
  \item 
    for every \(j \in \{1, \dotsc, m\}\),
    \(i \in \{1, \dotsc, n\}\),
    \(\xi \in \Rset^n\)
    and 
    \(v \in V\),
    \[
         \dualprod{w_i^j}{\xi}  \dualprod{v_i^j}{v}
      =
        \dualprod{e_i^j}{A (\xi)[v]}.
    \]
\end{enumerate}
\end{lemma}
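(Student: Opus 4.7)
The plan is to reinterpret (iii) via the total symbol. Let \(\tilde A \colon \Rset^n \otimes V \to E\) be the linear map determined by \(\tilde A(\xi \otimes v) = A(\xi)[v]\), and identify \(\Rset^n\) with its own dual via the Euclidean inner product, so that the transpose is \(\tilde A^* \colon E^* \to \Rset^n \otimes V^*\). Condition (iii) then asks exactly that \(\tilde A^*(e_i^j) = w_i^j \otimes v_i^j\) be a rank-one tensor. The first observation is that \(\tilde A^*(e) = w \otimes v^*\) for some \(v^* \in V^*\) if and only if \(e\) annihilates \(L_w \defeq \linspan\{A(\xi)[v] \st \xi \in w^\perp, v \in V\} \subset E\); consequently, the \((n-1)\)--canceling condition \(\bigcap_{w \ne 0} L_w = \{0\}\) is equivalent by duality to \(\sum_{w \ne 0} L_w^\perp = E^*\), i.e.\ rank-one preimages under \(\tilde A^*\) span \(E^*\).

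The key step combines this with ellipticity to obtain the following nondegeneracy: for every \(\xi_0 \in \Rset^n \setminus \{0\}\) and every \(v_0 \in V \setminus \{0\}\) there exist \(w \in \Rset^n \setminus \{0\}\), \(v^* \in V^*\) and \(e \in E^*\) with \(\tilde A^*(e) = w \otimes v^*\), \(\dualprod{w}{\xi_0} \ne 0\) and \(\dualprod{v^*}{v_0} \ne 0\). Indeed, \(f \defeq A(\xi_0)[v_0]\) is nonzero by ellipticity, so \((n-1)\)--canceling yields some \(w \ne 0\) with \(f \notin L_w\) and hence \(e \in L_w^\perp\) with \(\dualprod{e}{f} \ne 0\); writing \(\tilde A^*(e) = w \otimes v^*\) one computes \(\dualprod{e}{f} = \dualprod{w}{\xi_0}\dualprod{v^*}{v_0}\), so both factors are nonzero.

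To pass to finitely many pairs I would use compactness. To each rank-one \(w \otimes v^* \in \tilde A^*(E^*)\) attach the open set
\[
    O_{w,v^*}
  \defeq
    \bigl\{ ([\xi],[v]) \in \mathbb{P}^{n-1} \times \mathbb{P}(V) \st \dualprod{w}{\xi} \ne 0,\; \dualprod{v^*}{v} \ne 0 \bigr\}.
\]
The previous paragraph says exactly that these \(O_{w,v^*}\) cover the compact product \(\mathbb{P}^{n-1} \times \mathbb{P}(V)\); extracting a finite subcover produces \(\alpha \in \{1,\dotsc,N\}\) and triples \((w_\alpha, v_\alpha^*, e_\alpha)\) with \(\tilde A^*(e_\alpha) = w_\alpha \otimes v_\alpha^*\).

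Finally I would assemble the clusters by taking \emph{all} subsets \(C \subset \{1,\dotsc,N\}\) of cardinality \(n\) for which \(\{w_\alpha \st \alpha \in C\}\) is linearly independent; this is a finite list \(C_1,\dotsc,C_m\), and declaring \(w_i^j, v_i^j, e_i^j\) to be the entries of \(C_j\) makes (i) and (iii) immediate. For (ii), fix \(v \in V \setminus \{0\}\) and let \(S_v \defeq \{\alpha \st \dualprod{v_\alpha^*}{v} \ne 0\}\); if \(\{w_\alpha \st \alpha \in S_v\}\) did not span \(\Rset^n\), there would be some \(\eta \ne 0\) orthogonal to all such \(w_\alpha\), and then \(([\eta],[v])\) would not be covered by the finite family, a contradiction. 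Hence \(S_v\) contains \(n\) linearly independent \(w_\alpha\)'s, producing a \(C_j \subset S_v\) with \(\dualprod{v_i^j}{v} \ne 0\) for every \(i\). The main obstacle is the nondegeneracy in the second paragraph, which is the precise place where ellipticity and the \((n-1)\)--canceling hypothesis must be used simultaneously to force \emph{both} factors of the rank-one tensor to detect \(\xi_0\) and \(v_0\); once this is in hand, the compactness argument and the combinatorial assembly of clusters are routine.
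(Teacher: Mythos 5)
Your proof is correct, but it takes a genuinely different route to finiteness and to the construction of the clusters.

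The paper proceeds by iterative refinement: starting with no clusters, as long as the set \(X_\ell \defeq \bigcap_{j=1}^{\ell}\bigcup_{i=1}^{n}(v_i^j)^\perp\) is nonzero, it picks a nonzero \(v_*\in X_\ell\) and invokes an auxiliary lemma (proved by a nested induction on \(i\in\{1,\dotsc,n\}\)) that produces, for this one \(v_*\), an entire cluster of \(n\) linearly independent \(w_i\) together with \(\xi_i\) and \(e_i\) making \(\dualprod{e_i}{A(\xi_i)[v_*]}=1\). This strictly shrinks \(X_\ell\), and termination is guaranteed by a descending-chain property for finite unions of linear subspaces (lemma 5.8, essentially the Noetherian property of Zariski-closed sets). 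You instead translate condition (iii) into the rank-one condition \(\tilde A^*(e)=w\otimes v^*\) on the adjoint symbol, establish a pointwise nondegeneracy statement (for each nonzero \((\xi_0,v_0)\) some rank-one covector detects both), cover the compact space \(\mathbb{P}^{n-1}\times\mathbb{P}(V)\) by the resulting open sets, extract a finite subcover, and only then assemble clusters combinatorially by taking every \(n\)-subset of the resulting family whose \(w\)'s are independent. The two proofs rely on the same core interaction of ellipticity and \((n-1)\)-cancellation (producing \(f=A(\xi_0)[v_0]\ne 0\) that escapes \(L_w\) for some \(w\), hence is seen by some \(e\in L_w^\perp\), and then \(\dualprod{e}{f}=\dualprod{w}{\xi_0}\dualprod{v^*}{v_0}\)), but you can afford to extract one triple at a time precisely because the cluster-building is deferred to a post hoc combinatorial step, whereas the paper must produce a whole cluster per pass so that the bad set \(X_\ell\) strictly drops. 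Your termination mechanism is topological (compactness of projective space) where the paper's is order-theoretic (DCC on subspace arrangements); yours also gives a covering-number bound on the number of triples, though the final \(m\) (the number of admissible \(n\)-subsets) may be large. The assembly step for (ii) is correctly argued: if \(\{w_\alpha : \dualprod{v_\alpha^*}{v}\ne 0\}\) failed to span, the uncovered point \(([\eta],[v])\) would contradict the finite cover.
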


The proof of \cref{lemma_structure_n_1_canceling} will proceed by induction, the next lemma is the key step in the iteration.

\begin{lemma}
\label{lemma_vectors_1dir}
If the first-order homogeneous differential operator  with constant coefficients \(A (D)\) is elliptic and canceling, 
then for every \(v_* \in V \setminus \{0\}\), there exists vectors \(\xi_1, \dotsc, \xi_n \in \Rset^n\), vectors \(w_1, \dotsc, w_n \in \Rset^n\) and vectors \(e_1, \dotsc, e_n \in E^*\) such that 
\begin{enumerate}%
[(i)]
  \item 
    the vectors \(w_1, \dotsc, w_w\)
    are linearly independent in \(\Rset^n\),
  \item  
    for every \(i \in \{1, \dotsc, n\}\), 
    \(\dualprod{e_i}{A (\xi_i)[v_*]} = 1\),
  \item  
    for every \(i \in \{1, \dotsc, n\}\), 
    \(\dualprod{\xi_i}{w_i} = 1\),
  \item 
    for every \(i \in \{1, \dotsc, n\}\),
    \(\xi \in \Rset^n\)
    and \(v \in V\),
    \( 
        \dualprod{e_i}{A (\xi)[v]} 
      = 
        \dualprod{\xi}{w_i}
        \dualprod{A(\xi_i)^*[e_i]}{v} 
    \)%
    .
\end{enumerate}
\end{lemma}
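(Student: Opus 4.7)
The plan is to recognize condition (iv) as a rank-one factorization of the bilinear form $(\xi,v) \mapsto \dualprod{e_i}{A(\xi)[v]}$, to produce $n$ covectors $e_i \in E^*$ realizing such factorizations with linearly independent normal vectors $w_i$, and then to fix the normalizations (ii) and (iii) by appropriate choices of $\xi_i$ and scalar multiples of $e_i$.

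First, I would observe that since $A(\xi)$ is linear in $\xi$, the quantity $\dualprod{e_i}{A(\xi)[v]}$ defines a bilinear form on $\Rset^n \times V$, and condition (iv) asserts precisely that this form has rank at most one, with the factorization $\dualprod{\xi}{w_i} \dualprod{\alpha_i}{v}$ where $\alpha_i := A(\xi_i)^*[e_i] \in V^*$. Equivalently, setting $W_i := w_i^\perp$, the covector $e_i$ must annihilate the subspace $F_{W_i} := \linspan\{A(\xi)[v] : \xi \in W_i,\, v \in V\}$ of $E$ while remaining nontrivial on the full image $\linspan\{A(\xi)[v] : \xi \in \Rset^n,\, v \in V\}$. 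By ellipticity, the map $\xi \mapsto A(\xi)[v_*]$ from $\Rset^n$ to $E$ is injective for $v_* \ne 0$, so its image $N_{v_*}$ is an $n$-dimensional subspace of $E$; condition (ii) then corresponds to prescribing the value of $e_i$ on the element $A(\xi_i)[v_*] \in N_{v_*}$.

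The core step is the construction of the $e_i$, and this is where the canceling hypothesis on $A$ enters: it ensures that $F_W$ is a proper subspace of the full image (and, more precisely, fails to contain $N_{v_*}$) for a sufficiently rich family of hyperplanes $W \subset \Rset^n$. I would use this to select hyperplanes $W_1, \dots, W_n$ whose normals $w_1, \dots, w_n$ are linearly independent, together with $e_i \in F_{W_i}^\perp$ whose restriction to $N_{v_*}$ is nonzero. The resulting bilinear form is then rank one, with $\alpha_i \ne 0$ in $V^*$ and $\alpha_i(v_*) \ne 0$. I expect this to be the main obstacle: establishing that the $w_i$ can be chosen in general position while maintaining $F_{W_i} \not\supset N_{v_*}$, most likely via a dimension count on the subvariety of the Grassmannian of hyperplanes where these conditions fail, combined with the structural consequences of cancellation for elliptic symbols.

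Finally, given such $(e_i, w_i)$, I would choose $\xi_i$ with $\dualprod{\xi_i}{w_i} = 1$ (possible as $w_i \ne 0$), which yields (iii), and rescale each $e_i$ by the scalar $1/\alpha_i(v_*)$ to enforce (ii). Condition (iv) then holds automatically from the rank-one decomposition arranged in the construction step, and (i) by the initial choice of the $w_i$.
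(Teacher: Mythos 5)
Your structural reading of the lemma is exactly right: condition~(iv) is a rank-one factorization of $\xi \mapsto A(\xi)^*[e_i]$, equivalent to $e_i$ annihilating $F_{W_i} = \linspan\{A(\xi)[v] : \xi \in W_i,\ v\in V\}$ for the hyperplane $W_i = w_i^\perp$; conditions~(ii) and~(iii) are normalizations obtainable by rescaling once the pair $(e_i,W_i)$ is in hand; and the cancellation hypothesis (here in its $(n-1)$-canceling form, which is what the proof actually uses despite the statement saying ``canceling'') is what guarantees, for each nonzero target $A(\xi_0)[v_*]$, the existence of \emph{some} hyperplane $W$ with $A(\xi_0)[v_*]\notin F_W$. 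But the step you flag as ``the main obstacle'' --- getting normals $w_1,\dotsc,w_n$ that are simultaneously linearly independent and such that each $W_i$ works --- is a genuine gap, not just a calculation left to the reader. The $(n-1)$-canceling condition only says $\bigcap_W F_W = \{0\}$; it does not say that for a fixed nonzero $e_0\in E$ the locus $\{W : e_0 \in F_W\}$ is a proper closed subvariety of the Grassmannian, nor even that its complement is open (since $F_W$ can \emph{shrink} when the rank of $A|_{W\otimes V}$ drops, the bad locus need not be Zariski-closed). So a naive ``generic position'' argument on the Grassmannian does not obviously go through, and you have not supplied the extra input that would make it work.

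The paper sidesteps this entirely by constructing the family \emph{iteratively and dually}. Having already built $w_1,\dotsc,w_\ell$ (with $\ell\le n-1$), one picks a covector $\xi_{\ell+1}\ne 0$ annihilating $w_1,\dotsc,w_\ell$, then applies $(n-1)$-cancellation to the nonzero element $A(\xi_{\ell+1})[v_*]$ (nonzero by ellipticity) to obtain \emph{some} hyperplane $W_{\ell+1}$ with $A(\xi_{\ell+1})[v_*]\notin F_{W_{\ell+1}}$; one is given no control over $W_{\ell+1}$, and none is needed. One then defines $w_{\ell+1}$ as the normal to $W_{\ell+1}$ scaled so that $\dualprod{\xi_{\ell+1}}{w_{\ell+1}}=1$ (this scaling is possible because $\xi_{\ell+1}\notin W_{\ell+1}$, which in turn follows from $A(\xi_{\ell+1})[v_*]\notin F_{W_{\ell+1}}$), and chooses $e_{\ell+1}\in F_{W_{\ell+1}}^\perp$ with $\dualprod{e_{\ell+1}}{A(\xi_{\ell+1})[v_*]}=1$. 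Linear independence of $w_1,\dotsc,w_{\ell+1}$ is then automatic: $\xi_{\ell+1}$ pairs to zero with each $w_i$, $i\le\ell$, but to one with $w_{\ell+1}$. This is the idea your proposal is missing --- rather than choosing the $w_i$ in general position \emph{first} and hoping cancellation cooperates, one lets cancellation dictate each $W_i$ and recovers independence from the freely chosen $\xi_i$ perpendicular to the earlier normals. You should replace your Grassmannian dimension count with this construction; the rest of your outline (rank-one factorization, normalization, verification of~(iv)) then goes through as you describe.
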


\begin{proof}
We proceed by induction, that is, we are proving that for every \(\ell \in \{0, 1, \dotsc, n\}\),  there exists covectors \(\xi_1, \dotsc, \xi_\ell \in \Rset^n\), vectors \(w_1, \dotsc, w_\ell \in \Rset^n\) and vectors \(e_1, \dotsc, e_\ell \in E^*\) such that 
\begin{enumerate}%
[(a)]
  \item 
  \label{it_n1c_ind_linind}
    the vectors \(w_1, \dotsc, w_\ell \in \Rset^n\)
    are linearly independent in \(\Rset^n\),
  \item 
  \label{it_n1c_ind_nontriv}
    for every \(i \in \{1, \dotsc, \ell\}\), 
    \(\dualprod{e_i}{A (\xi_i)[v_*]} = 1\),
  \item  
  \label{it_n1c_ind_biorthog}
    for every \(i \in \{1, \dotsc, \ell\}\), 
    \(\dualprod{\xi_i}{w_i} = 1\),
  \item 
  \label{it_n1c_ind_repres}   
    for every \(i \in \{1, \dotsc, \ell\}\),
    \(\xi \in \Rset^n\)
    and \(v \in V\),
    \( 
        \dualprod{e_i}{A (\xi)[v]} 
      = 
         \dualprod{\xi}{w_i}
         \dualprod{A(\xi_i)^*[e_i]}{v}
    \)%
    .
\end{enumerate}

For \(\ell = 0\), the assertion holds vacuously. 
Assuming now that the assumption holds for some \(\ell \in \{0, 1, \dotsc, n - 1\}\), we will prove the assertion for \(\ell + 1\).

Since \(\ell \le n - 1\), there exists a covector \(\xi_{\ell + 1} \in \Rset^n \setminus \{0\}\) such that for every \(i \in \{1, \dotsc, \ell\}\), \(\dualprod{\xi_{\ell + 1}}{w_i}=0\). 
(In particular, if \(\ell = 0\), we just take any \(\xi_1 \in \Rset^n \setminus \{0\}\).)
Since the operator \(A (D)\) is elliptic, the linear operator \(A (\xi_{\ell + 1}) : V \to E\) is injective, and thus, since \(v_* \ne 0\), we have \(A (\xi_{\ell + 1})[v_*] \ne 0\). 
Since the operator \(A (D)\) is also \((n - 1)\)--canceling, there exists an \((n - 1)\)--dimensional linear subspace \(W_{\ell + 1} \subset \Rset^n\) such that 
\begin{equation}
\label{eq_luchieB0ie}
    A (\xi_{\ell + 1})[v_*] 
  \not \in 
    \linspan 
    \,
      \bigl\{ 
        A (\xi)[v] 
      \st 
          \xi \in W_{\ell + 1} 
        \text{ and } 
          v \in V
      \bigr\}.
\end{equation}
We define now \(w_{\ell + 1} \in \Rset^n\) to be a vector such that \(\dualprod{\xi_{\ell + 1}}{w_{\ell + 1}} = 1\) and for every \(\xi \in W_{\ell + 1}\), one has \(\dualprod{\xi}{w_{\ell + 1}} = 0\). 
In particular, this implies \eqref{it_n1c_ind_biorthog}. 
Since the vectors \(w_1, \dotsc, w_{\ell}\) are linearly independent and since by construction we have for every \(i \in \{1, \dotsc, \ell\}\), \(\dualprod{\xi_{\ell + 1}}{w_i} = 0\), the vectors \(w_1, \dotsc, w_\ell, w_{\ell + 1}\) are linearly independent in \(\Rset^n\) and thus \eqref{it_n1c_ind_linind} holds.
By \eqref{eq_luchieB0ie} there exists a covector \(e_{\ell + 1} \in E^*\) such that \(\dualprod{e_{\ell + 1}}{A (\xi_{\ell + 1})[v_*])} = 1\) and for every \(\xi \in W_{\ell + 1}\) and \(v \in V\), one has \(\dualprod{e_{\ell + 1}}{A (\xi)[v])} = 0\). In particular, \eqref{it_n1c_ind_nontriv} holds.
Moreover, we have \(A (\xi)^*[e_{\ell + 1}] = 0\) when \(\xi \in W_{\ell + 1}\). 
Therefore, since \(\dim W_{\ell + 1} = n - 1\), by the classical representation theorem of linear mappings, we deduce that for every \(\xi \in \Rset^n\),
\[
 A(\xi)^*[e_{\ell + 1}] = \dualprod{\xi}{w_{\ell + 1}}\, A (\xi_{\ell + 1})^*[e_{\ell + 1}],  
\]
which implies assertion \eqref{it_n1c_ind_repres}.
\end{proof}

A set \(X \subseteq V\) is a \emph{linear subspace arrangement} whenever \(X\) is a finite union of linear subspaces of \(V\).

\begin{lemma}
\label{lemma_descending}
Let \(V\) be a finite-dimensional space.
Assume that for each \(\ell \in \Nset\), \(X_\ell\) is a linear subspace arrangement of \(V\) and that \(X_\ell \supseteq X_{\ell + 1}\). Then there exists \(\ell_0 \in \Nset\) such that for every \(\ell \ge \ell_0\),
\(
    X_\ell 
  =
    X_{\ell_0}
\)%
.
\end{lemma}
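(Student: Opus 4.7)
The plan is to use the canonical decomposition of a subspace arrangement into its maximal subspaces and induct on the top dimension.  The key algebraic fact, valid over any infinite field and hence over \(\Rset\), is that a linear subspace contained in a finite union of linear subspaces must lie inside one of them.  It follows that each arrangement \(X\) admits a unique irredundant representation \(X = \bigcup_{W \in \mathcal{M}(X)} W\), where \(\mathcal{M}(X)\) is the finite set of maximal linear subspaces contained in \(X\); and whenever \(X' \subseteq X\), every \(W \in \mathcal{M}(X')\) sits inside some \(W'' \in \mathcal{M}(X)\).

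I would induct on \(c \defeq \max\{\dim W \st W \in \mathcal{M}(X_1)\}\).  The sequence \(c(X_\ell) \defeq \max\{\dim W \st W \in \mathcal{M}(X_\ell)\}\) is nonincreasing in \(\ell\), since every \(W \in \mathcal{M}(X_{\ell+1})\) has a parent in \(\mathcal{M}(X_\ell)\) of at least equal dimension.  If \(c(X_\ell)\) ever drops strictly below \(c\), the tail of the chain is handled by the inductive hypothesis.  So assume \(c(X_\ell) = c\) for all \(\ell\), and set \(S_\ell \defeq \{W \in \mathcal{M}(X_\ell) \st \dim W = c\}\).  For \(W \in S_{\ell+1}\) the parent \(W'' \in \mathcal{M}(X_\ell)\) satisfies \(\dim W \le \dim W'' \le c = \dim W\), so \(W = W'' \in S_\ell\); hence \(S_{\ell+1} \subseteq S_\ell\).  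This decreasing chain in the finite set \(\mathcal{M}(X_1)\) stabilizes, and discarding finitely many initial indices we may assume \(S_\ell = S\) is constant.  Set \(Z \defeq \bigcup_{W \in S} W\), so \(Z \subseteq X_\ell\) for all \(\ell\).

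Decompose \(X_\ell = Z \cup \tilde{X}_\ell\) with \(\tilde{X}_\ell \defeq \bigcup_{W \in \mathcal{M}(X_\ell) \setminus S} W\), an arrangement whose maximal subspaces all have dimension strictly less than \(c\).  The main remaining point is that \(\tilde{X}_{\ell+1} \subseteq \tilde{X}_\ell\).  Given \(W \in \mathcal{M}(X_{\ell+1}) \setminus S\) with parent \(W'' \in \mathcal{M}(X_\ell)\), if \(W'' \in S\) then \(W'' \in \mathcal{M}(X_{\ell+1})\) as well, so \(W \subsetneq W''\) would contradict maximality of \(W\) in \(X_{\ell+1}\); hence \(W'' \notin S\) and \(W \subseteq W'' \subseteq \tilde{X}_\ell\).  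Applying the induction hypothesis to \((\tilde{X}_\ell)\) yields its stabilization, and therefore that of \(X_\ell = Z \cup \tilde{X}_\ell\).

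The main obstacle is establishing the maximal-subspace formalism cleanly and verifying the two propagation properties \(S_{\ell+1} \subseteq S_\ell\) and \(\tilde{X}_{\ell+1} \subseteq \tilde{X}_\ell\).  Both reduce to the key algebraic fact above together with elementary maximality arguments; an alternative, less hands-on route would be to identify subspace arrangements with Zariski-closed subsets of \(V\) (using a product of separating linear forms to show \(V(I(X)) = X\)) and invoke Noetherianity of \(\Rset[V]\) directly.
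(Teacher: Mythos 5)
Your proposal is correct and takes essentially the same route as the paper's proof: both stratify the arrangement by dimension, observe that the top-dimensional maximal components of a descending chain of arrangements form a descending chain of finite sets (using, implicitly or explicitly, the fact that a subspace contained in a finite union of subspaces must lie in one of them), and then recurse on lower dimensions. The paper packages this as a downward induction on a threshold \(j\) over the sub-arrangements \(X_\ell^j\) of components of dimension at least \(j\), whereas you peel off the stabilized top layer \(S\) and induct on the top dimension of the residual arrangement \(\tilde{X}_\ell\); the structural idea is the same, and both you and the paper note the alternative argument via Zariski-closedness and Noetherianity.
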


\Cref{lemma_descending} can be proved by observing that for every \(\ell \in \Nset\), the set \(X_\ell\) is Zariski-closed so that the sequence \((X_\ell)_{\ell \in \Nset}\) is a descending chain of Zariski-closed sets for which the conclusion follows (see for example \cite{Hartshorne_1977}*{Example 1.4.7}). We give a direct proof for the convenience of the reader.

\begin{proof}
[Proof of \cref{lemma_descending}]
For every \(j\in \{1, \dotsc, \dim V\}\) we consider the set 
\[
    X_\ell^j
  =
    \bigcup 
      \,
      \bigl\{
        Y \subseteq V
      \st
        Y \text{ is a linear subspace, }
        \dim Y \ge j 
        \text{ and }
        Y \subseteq X_\ell
      \bigr\}.
\]
We have \(X_\ell^j \supseteq X_{\ell + 1}^j\) and \(X_\ell^j\) is a finite union of linear subspaces of dimension at least \(j\).

We prove now by downward induction, that for every \(j \in \{1, \dotsc, \dim V\}\), there exists \(\ell_j\) such that for every \(\ell \ge \ell_j\), we have
\(X_\ell^j = X_{\ell_j}^j\).
For \(j = \dim V\), either for every \(\ell \in \Nset\), \(X_\ell = V\) and then \(\ell_j = 0\),
or there exists \(\ell_j \in \Nset\) such that \(X_{\ell} = \{0\}\).

We assume now that the assertion is proved for some \(j \in \{2, \dotsc, \dim V\}\). 
We observe that for every \(\ell \ge \ell_j\), the components of \(X_\ell^{j - 1}\) of dimension at least \(j\) remain the same and the \((j - 1)\)--dimensional components of \(X_\ell^{j - 1}\) form a subset of those of \(X_{\ell_j}^{j - 1}\). 
We have thus a nonincreasing sequence of finite subsets; there exists thus \(\ell_{j - 1} \ge \ell_j\) such that for every \(\ell \ge \ell_{j - 1}\), \(X_\ell^{j - 1} = X_{\ell_{j - 1}}^{j -1}\).
\end{proof}

We are now in position to prove \cref{lemma_vectors_1dir}.

\begin{proof}%
[Proof of \cref{lemma_vectors_1dir}]
We are going to construct a the family of vectors iteratively over \(\ell\).
At each step, we assume that we have vectors \(w_i^j \in \Rset^n\), \(v_i^j \in V^*\) and \(e_i^j \in E^*\) for \(j \in \{1, \dotsc, \ell\}\) and \(i \in \{1, \dotsc, n\}\), such that 
\begin{enumerate}%
[(a)]
  \item for every \(j \in \{1, \dotsc, \ell\}\), the vectors \(w_1^j, \dotsc, w_n^j\) are linearly independent in \(\Rset^n\),
  \item 
    for every \(j \in \{1, \dotsc, \ell\}\),
    \(i \in \{1, \dotsc, n\}\),
    \(\xi \in \Rset^n\)
    and 
    \(v \in V\),
    \[
     \dualprod{w_i^j}{\xi} 
        \dualprod{v_i^j}{v}
      =
        \dualprod{e_i^j}{A (\xi)[v]}.
    \]
\end{enumerate}
This is trivially satisfied when \(\ell = 0\).

Assume thus that we have such families of vectors for some \(\ell\).
If 
\[
    X_\ell
  \defeq
    \bigcap_{j = 1}^\ell
      \bigcup_{i = 1}^n (v_i^j)^\perp = \{0\}, 
\]
then the proposition is proved with \(m = \ell\).
Otherwise, we take \(v_*^{\ell + 1} \in X_\ell \setminus \{0\}\), and we obtain by \cref{lemma_vectors_1dir}
vectors \(\xi_1^{\ell + 1},  \dotsc, \xi_n^{\ell + 1} \in \Rset^n\), \(w_1^{\ell + 1}, \dotsc, w_n^{\ell + 1} \in \Rset^n\) and \(e_1^{\ell + 1}, \dotsc, e_n^{\ell + 1} \in E^*\). We set \(v_i^{\ell + 1} = A (\xi_i^{\ell + 1})^*[e_i^{\ell + 1}]\), and we observe that the family satisfies the same condition and moreover, since for every \(i \in \{1, \dotsc, n\}\),  
\(
 \dualprod{v_i^{\ell + 1}}{v_*^{\ell + 1}} = 
 \dualprod{e_i^{\ell+ 1}}{A (\xi_i^{\ell + 1})[v_*^{\ell + 1}]} 
      =  1
\), we have \(v_*^{\ell + 1} \not \in X_{\ell + 1}\) 
and thus \(X_{\ell + 1} \subsetneq X_{\ell}\).

We conclude by observing that the procedure must finish after a finite number of steps in view of \cref{lemma_descending}.
\end{proof}

\begin{proof}%
[Proof of \cref{theorem_n_1_canceling}]
Let \(m \in \Nset\) and,  for each \(j \in \{1, \dotsc, m\}\) and \(i \in \{1, \dotsc, n\}\), the vectors \(w_i^j \in \Rset^n\), \(v_i^j \in V^*\) and \(e_i^j \in E^*\) be given for \(A (D)\) by \cref{lemma_structure_n_1_canceling}.
In view of \cref{proposition_directional_derivatives}, we have
\[
    \norm[L^{\frac{n}{n - 1}, 1}(\mathbb{R}^n,V)]{u}
  \le
    C
    \sum_{j=1}^m 
      \norm[L^1(\mathbb{R}^n)]{\dualprod{v_1^j}{D u[w_1^j]}}^\frac{1}{n}
      \dotsm
      \norm[L^1(\mathbb{R}^n)]{\dualprod{v_n^j}{D u[w_n^j]}}^\frac{1}{n}
  .    
\] 
Now the construction of \cref{lemma_structure_n_1_canceling} with $v=\widehat{u} (\xi)$, the Fourier transform of $u$ at the point \(\xi \in \Rset^n\), yields for every \(j \in \{1, \dotsc, m\}\) and \(i \in \{1, \dotsc, n\}\) and \(\xi \in \Rset^n\),
\[
\dualprod{v_i^j}{\dualprod{2\pi i\xi}{w_i^j} \widehat{u} (\xi)}
      =
        \dualprod{e_i^j}{A (2\pi i \xi)\widehat{u} (\xi)},
\]
and thus inverting the Fourier transform we obtain the pointwise equality
\[
\dualprod{v_i^j}{Du[w_i^j]} = \dualprod{v_i^j}{\dualprod{Du}{w_i^j}} = 
        \dualprod{e_i^j}{A (D)u}.
\]
Hence
\[
 \norm[L^1 (\Rset^n)]{\dualprod{v_i^j}{D u[w_i^j]}}
 = \norm[L^1 (\Rset^n)]{\dualprod{e_i^j}{A (D) u}},
\]
and the conclusion then follows.
\end{proof}

\begin{proof}[Proof of \cref{theorem_canceling_2d}]
This follows from \cref{theorem_n_1_canceling}, \cite{VanSchaftingen_2013}*{Theorem 1.3}, the embedding between Lorentz spaces \(L^{n/(n -1), 1} (\Rset^n) \subset L^{n/(n -1)} (\Rset^n)\) and the fact that the \(1\)--canceling and canceling conditions are equivalent.
\end{proof}

\section*{Acknowledgements}
The authors would like to thank Wen-Wei Lin, the S.T. Yau Center at National Chiao Tung University, and the National Center for Theoretical Sciences of Taiwan for their support in the conference where this collaboration was initiated.  
The first author is supported in part by the Taiwan Ministry of Science and Technology under research grants 105-2115-M-009-004-MY2, 107-2918-I-009-003 and 
107-2115-M-009-002-MY2.

\begin{bibdiv}
\begin{biblist}

\bib{Adams_Fournier_2003}{book}{
   author={Adams, Robert A.},
   author={Fournier, John J. F.},
   title={Sobolev spaces},
   series={Pure and Applied Mathematics (Amsterdam)},
   volume={140},
   edition={2},
   publisher={Elsevier/Academic Press}, 
   address={Amsterdam},
   date={2003},
   pages={xiv+305},
   isbn={0-12-044143-8},
}
\bib{Alvino}{article}{
   author={Alvino, Angelo},
   title={Sulla diseguaglianza di Sobolev in spazi di Lorentz},
   journal={Boll. Un. Mat. Ital. A (5)},
   volume={14},
   date={1977},
   number={1},
   pages={148--156},
}

\bib{Aronszajn_Gagliardo_1965}{article}{
   author={Aronszajn, N.},
   author={Gagliardo, E.},
   title={Interpolation spaces and interpolation methods},
   journal={Ann. Mat. Pura Appl. (4)},
   volume={68},
   date={1965},
   pages={51--117},
   issn={0003-4622},
}
                
\bib{Babadijan}{article}{
   author={Babadjian, Jean-Fran\c{c}ois},
   title={Traces of functions of bounded deformation},
   journal={Indiana Univ. Math. J.},
   volume={64},
   date={2015},
   number={4},
   pages={1271--1290},
   issn={0022-2518},
   doi={10.1512/iumj.2015.64.5601},
}

\bib{BFT}{article}{
   author={Barroso, Ana Cristina},
   author={Fonseca, Irene},
   author={Toader, Rodica},
   title={A relaxation theorem in the space of functions of bounded
   deformation},
   journal={Ann. Scuola Norm. Sup.\thinspace{}Pisa Cl. Sci. (4)},
   volume={29},
   date={2000},
   number={1},
   pages={19--49},
   issn={0391-173X},
}

\bib{Bourgain_Brezis_2004}{article}{
   author={Bourgain, Jean},
   author={Brezis, Ha\"{i}m},
   title={New estimates for the Laplacian, the div--curl, and related Hodge
   systems},
   journal={C. R. Math. Acad. Sci. Paris},
   volume={338},
   date={2004},
   number={7},
   pages={539--543},
   issn={1631-073X},
   doi={10.1016/j.crma.2003.12.031},
}

\bib{Bourgain_Brezis_2007}{article}{
   author={Bourgain, Jean},
   author={Brezis, Ha\"{i}m},
   title={New estimates for elliptic equations and Hodge type systems},
   journal={J. Eur. Math. Soc. (JEMS)},
   volume={9},
   date={2007},
   number={2},
   pages={277--315},
   issn={1435-9855},
   doi={10.4171/JEMS/80},
}

 \bib{Bousquet_VanSchaftigen_2014}{article}{
   author={Bousquet, Pierre},
   author={Van Schaftingen, Jean},
   title={Hardy--Sobolev inequalities for vector fields and canceling linear
   differential operators},
   journal={Indiana Univ. Math. J.},
   volume={63},
   date={2014},
   number={5},
   pages={1419--1445},
   issn={0022-2518},
   doi={10.1512/iumj.2014.63.5395},
}         

\bib{Brezis_1979}{article}{
   author={Brezis, H.},
   title={Laser beams and limiting cases of Sobolev inequalities},
   conference={
      title={Nonlinear partial differential equations and their
      applications. Coll\`ege de France Seminar, Vol. II},
      address={Paris},
      date={1979--1980},
   },
   book={
      series={Res. Notes in Math.},
      volume={60},
      publisher={Pitman}, 
      address={Boston, Mass.-London},
   },
   date={1982},
   pages={86--97},
}
\bib{Brezis_Wainger_1980}{article}{
   author={Brezis, Ha\"{i}m},
   author={Wainger, Stephen},
   title={A note on limiting cases of Sobolev embeddings and convolution
   inequalities},
   journal={Comm. Partial Differential Equations},
   volume={5},
   date={1980},
   number={7},
   pages={773--789},
   issn={0360-5302},
}
                
\bib{DalMaso}{article}{
   author={Dal Maso, Gianni},
   title={Generalised functions of bounded deformation},
   journal={J. Eur. Math. Soc. (JEMS)},
   volume={15},
   date={2013},
   number={5},
   pages={1943--1997},
   issn={1435-9855},
   doi={10.4171/JEMS/410},
}

\bib{DeFigueiredo}{article}{
   author={de Figueiredo, Djairo Guedes},
   title={The coerciveness problem for forms over vector valued functions},
   journal={Comm. Pure Appl. Math.},
   volume={16},
   date={1963},
   pages={63--94},
   issn={0010-3640},
   doi={10.1002/cpa.3160160109},
}
\bib{Duvaut_Lions}{article}{
   author={Duvaut, G.},
   author={Lions, J.-L.},
   title={Un probl\`eme d'\'{e}lasticit\'{e} avec frottement},
   journal={J. M\'{e}canique},
   volume={10},
   date={1971},
   pages={409--420},
   issn={0750-7240},
}

\bib{Evans_Gariepy_1992}{book}{
   author={Evans, Lawrence C.},
   author={Gariepy, Ronald F.},
   title={Measure theory and fine properties of functions},
   series={Studies in Advanced Mathematics},
   publisher={CRC Press}, 
   address={Boca Raton, Fla.},
   date={1992},
   pages={viii+268},
   isbn={0-8493-7157-0},
}
\bib{Federer_Fleming_1960}{article}{
   author={Federer, Herbert},
   author={Fleming, Wendell H.},
   title={Normal and integral currents},
   journal={Ann. of Math. (2)},
   volume={72},
   date={1960},
   pages={458--520},
   issn={0003-486X},
   doi={10.2307/1970227},
}
\bib{Fournier}{article}{
   author={Fournier, John J. F.},
   title={Mixed norms and rearrangements: Sobolev's inequality and
   Littlewood's inequality},
   journal={Ann. Mat. Pura Appl. (4)},
   volume={148},
   date={1987},
   pages={51--76},
   issn={0003-4622},
   doi={10.1007/BF01774283},
}

\bib{Gagliardo}{article}{
   author={Gagliardo, Emilio},
   title={Propriet\`a di alcune classi di funzioni in pi\`u variabili},
   journal={Ricerche Mat.},
   volume={7},
   date={1958},
   pages={102--137},
   issn={0035-5038},
}
\bib{Gagliardo_1960}{article}{
   author={Gagliardo, Emilio},
   title={Interpolazione di spazi di Banach e applicazioni},
   journal={Ricerche Mat.},
   volume={9},
   date={1960},
   pages={58--81},
   issn={0035-5038},
}

\bib{Grafakos}{book}{
   author={Grafakos, Loukas},
   title={Classical Fourier analysis},
   series={Graduate Texts in Mathematics},
   volume={249},
   edition={3},
   publisher={Springer, New York},
   date={2014},
   pages={xviii+638},
   isbn={978-1-4939-1193-6},
   isbn={978-1-4939-1194-3},
   doi={10.1007/978-1-4939-1194-3},
}

\bib{Hartshorne_1977}{book}{
   author={Hartshorne, Robin},
   title={Algebraic geometry},
   series={Graduate Texts in Mathematics},
   volume={52},
   publisher={Springer},
   address={New York--Heidelberg},
   date={1977},
   pages={xvi+496},
   isbn={0-387-90244-9},
}

\bib{Horn_Johnson_2013}{book}{
   author={Horn, Roger A.},
   author={Johnson, Charles R.},
   title={Matrix analysis},
   edition={2},
   publisher={Cambridge University Press}, 
   address={Cambridge},
   date={2013},
   pages={xviii+643},
   isbn={978-0-521-54823-6},
}
                
\bib{Lanzani_Stein_2005}{article}{
   author={Lanzani, Loredana},
   author={Stein, Elias M.},
   title={A note on div curl inequalities},
   journal={Math. Res. Lett.},
   volume={12},
   date={2005},
   number={1},
   pages={57--61},
   issn={1073-2780},
   doi={10.4310/MRL.2005.v12.n1.a6},
}
\bib{Lions_1959}{article}{
   author={Lions, J.-L.},
   title={Th\'{e}or\`emes de trace et d'interpolation},
   partial={
      part={I},
      journal={Ann. Scuola Norm. Sup.\thinspace{}Pisa (3)},
      volume={13},
      date={1959},
      pages={389--403},},
   partial={
   part={II},
      language={French},
      journal={Ann. Scuola Norm. Sup.\thinspace{}Pisa (3)},
      volume={14},
      date={1960},
      pages={317--331},
   },
   partial={
      part={III},
      journal={J. Math. Pures Appl. (9)},
      volume={42},
      date={1963},
      pages={195--203},
      issn={0021-7824},}
   partial={
      part= {IV},
      journal={Math. Ann.},
      volume={151},
      date={1963},
      pages={42--56},
      }
  partial={
    part={V},
    journal={An. Acad. Brasil. Ci.},
    volume={35},
    date={1963},
    pages={1--10},
  }
}

\bib{Lions_Peetre_1961}{article}{
   author={Lions, Jacques-Louis},
   author={Peetre, Jaak},
   title={Propri\'{e}t\'{e}s d'espaces d'interpolation},
   journal={C. R. Acad. Sci. Paris},
   volume={253},
   date={1961},
   pages={1747--1749},
}

\bib{Lions_Peetre_1964}{article}{
   author={Lions, Jacques-Louis},
   author={Peetre, Jaak},
   title={Sur une classe d'espaces d'interpolation},
   journal={Inst. Hautes \'{E}tudes Sci. Publ. Math.},
   number={19},
   date={1964},
   pages={5--68},
   issn={0073-8301},
}

\bib{Loomis_Whitney_1949}{article}{
   author={Loomis, L. H.},
   author={Whitney, H.},
   title={An inequality related to the isoperimetric inequality},
   journal={Bull. Amer. Math. Soc},
   volume={55},
   date={1949},
   pages={961--962},
   issn={0002-9904},
   doi={10.1090/S0002-9904-1949-09320-5},
}

\bib{Lorentz_1950}{article}{
   author={Lorentz, G. G.},
   title={Some new functional spaces},
   journal={Ann. of Math. (2)},
   volume={51},
   date={1950},
   pages={37--55},
   issn={0003-486X},
   doi={10.2307/1969496},
}

\bib{Mazya_1960}{article}{
   author={Maz\cprime ya, V. G.},
   title={Classes of domains and imbedding theorems for function spaces},
   journal={Soviet Math. Dokl.},
   volume={1},
   date={1960},
   pages={882--885},
   issn={0197-6788},
}
                
\bib{Nirenberg}{article}{
   author={Nirenberg, L.},
   title={On elliptic partial differential equations},
   journal={Ann. Scuola Norm. Sup.\thinspace{}Pisa (3)},
   volume={13},
   date={1959},
   pages={115--162},
}

\bib{ONeil_1963}{article}{
   author={O'Neil, Richard},
   title={Convolution operators and $L(p,\,q)$ spaces},
   journal={Duke Math. J.},
   volume={30},
   date={1963},
   pages={129--142},
   issn={0012-7094},
}
                
\bib{Peetre_1966}{article}{
   author={Peetre, Jaak},
   title={Espaces d'interpolation et th\'{e}or\`eme de Soboleff},
   journal={Ann. Inst. Fourier (Grenoble)},
   volume={16},
   date={1966},
   number={fasc. 1},
   pages={279--317},
   issn={0373-0956},
}
                    
\bib{Poornima}{article}{
   author={Poornima, S.},
   title={An embedding theorem for the Sobolev space $W^{1,1}$},
   journal={Bull. Sci. Math. (2)},
   volume={107},
   date={1983},
   number={3},
   pages={253--259},
   issn={0007-4497},
}

\bib{Strauss}{article}{
   author={Strauss, Monty J.},
   title={Variations of Korn's and Sobolev's equalities},
   conference={
      title={Partial differential equations},
      address={Univ. California,
      Berkeley, Calif.},
      date={1971},
   },
   book={
      publisher={Amer. Math. Soc., Providence, R.I.},
      series={Proc. Sympos. Pure Math.}, 
      volume={XXIII}, 
   },
   date={1973},
   pages={207--214},
}

\bib{Tartar_1998}{article}{
   author={Tartar, Luc},
   title={Imbedding theorems of Sobolev spaces into Lorentz spaces},
   journal={Boll. Unione Mat. Ital. Sez. B Artic. Ric. Mat. (8)},
   volume={1},
   date={1998},
   number={3},
   pages={479--500},
   issn={0392-4041},
}

\bib{Tartar_2007}{book}{
   author={Tartar, Luc},
   title={An introduction to Sobolev spaces and interpolation spaces},
   series={Lecture Notes of the Unione Matematica Italiana},
   volume={3},
   publisher={Springer, Berlin; UMI, Bologna},
   date={2007},
   pages={xxvi+218},
   isbn={978-3-540-71482-8},
   isbn={3-540-71482-0},
}
\bib{Temam-Strang}{article}{
   author={Temam, Roger},
   author={Strang, Gilbert},
   title={Functions of bounded deformation},
   journal={Arch. Rational Mech. Anal.},
   volume={75},
   date={1980/81},
   number={1},
   pages={7--21},
   issn={0003-9527},
}
            
\bib{Triebel_1978}{book}{
   author={Triebel, H.},
   title={Interpolation theory, function spaces, differential operators},
   publisher={VEB Deutscher Verlag der Wissenschaften, Berlin},
   date={1978},
   pages={528},
}            

\bib{VanSchaftingen_2004}{article}{
   author={Van Schaftingen, Jean},
   title={Estimates for $L^1$--vector fields},
   journal={C. R. Math. Acad. Sci. Paris},
   volume={339},
   date={2004},
   number={3},
   pages={181--186},
   issn={1631-073X},
   doi={10.1016/j.crma.2004.05.013},
}

\bib{VanSchaftingen_2004_ARB}{article}{
   author={Van Schaftingen, Jean},
   title={Estimates for $L^1$ vector fields with a second order condition},
   journal={Acad. Roy. Belg. Bull. Cl. Sci. (6)},
   volume={15},
   date={2004},
   number={1-6},
   pages={103--112},
   issn={0001-4141},
}

\bib{VanSchaftingen_2010}{article}{
   author={Van Schaftingen, Jean},
   title={Limiting fractional and Lorentz space estimates of differential
   forms},
   journal={Proc. Amer. Math. Soc.},
   volume={138},
   date={2010},
   number={1},
   pages={235--240},
   issn={0002-9939},
   doi={10.1090/S0002-9939-09-10005-9},
}
                
\bib{VanSchaftingen_2013}{article}{
   author={Van Schaftingen, Jean},
   title={Limiting Sobolev inequalities for vector fields and canceling
   linear differential operators},
   journal={J. Eur. Math. Soc. (JEMS)},
   volume={15},
   date={2013},
   number={3},
   pages={877--921},
   issn={1435-9855},
   doi={10.4171/JEMS/380},
}

\bib{VanSchaftingen_2015}{article}{
   author={Van Schaftingen, Jean},
   title={Limiting Bourgain-Brezis estimates for systems of linear
   differential equations: theme and variations},
   journal={J. Fixed Point Theory Appl.},
   volume={15},
   date={2014},
   number={2},
   pages={273--297},
   issn={1661-7738},
   doi={10.1007/s11784-014-0177-0},
}     

\bib{Ziemer_1989}{book}{
   author={Ziemer, William P.},
   title={Weakly differentiable functions},
   series={Graduate Texts in Mathematics},
   volume={120},
   subtitle={Sobolev spaces and functions of bounded variation},
   publisher={Springer},
   address={New York},
   date={1989},
   pages={xvi+308},
   isbn={0-387-97017-7},
   doi={10.1007/978-1-4612-1015-3},
}
                
\end{biblist}
 
\end{bibdiv}

\end{document}